\newtheorem{theorem}{Theorem}
\theoremstyle{definition}
\newtheorem{ardef}{Definition}
\theoremstyle{plain}
\newtheorem*{exo}{Example}
\newtheorem{lemma}{Lemma}
\newtheorem{obs}{Observation}
\newtheorem{property}{Property}
\newtheorem{remark}{Remark}
\author{Krystian JOBCZYK and Mirna D\v{Z}AMONJA}
\title{The Lindstr\o m's Characterizability of Abstract Logic Systems \\for Analytic Structures Based on Measures}
\begin{document}
\maketitle

%% use optional labels to link authors explicitly to addresses:
%% \author[label1,label2]{}
%% \affiliation[label1]{organization={},
%%             addressline={},
%%             city={},
%%             postcode={},
%%             state={},
%%             country={}}
%%
%% \affiliation[label2]{organization={},
%%             addressline={},
%%             city={},
%%             postcode={},
%%             state={},
%%             country={}}

%\author[inst1,inst2]{Krystian Jobczyk}
%\title{}

%\affiliation[inst1]{organization={Department of Applied Computer Science, AGH University of Science and Technology},%Department and Organization
           % addressline={al. Mickiewicza 30}, 
           % city={Krak\'ow},
           % postcode={30-059}, 
            %state={},
            %country={Poland}}
%\affiliation[inst2]{organization={The Saul Kripke Center},%Department and Organization
           % addressline={365 5th Avenue}, 
           % city={New York},
          %  postcode={10016} 
          %  state={},
          %  country={U.S.A.}(external membership)}
%\author[inst3]{Mirna D\v{z}amonja}
%\author[inst1,inst2]{Author Three}
%\affiliation[inst3]{organization={Institut de Recherche en Informatique Fondamentale CNRS et l’Universit\'{e} de Paris},%Department and Organization
            %addressline={8 Place Aurelie Nemours}, 
            %city={Paris Cedex 13},
            %postcode={75205}, 
            %state={}
            %country={France}}
%\affiliation[inst4]{organization={Institute of Mathematics, Czech Academy of Science},%Department and Organization
           % addressline={} 
           % city={Prague}
           % postcode={} 
           % state={},
           % country={Czech Republik}}

\begin{abstract}
%% Text of abstract
In 1969, Per Lindstr\o m proved his celebrated theorem characterising the first-order logic and established criteria for the first-order definability of formal theories for discrete structures. K. J. Barwise, S. Shelah, J. V\"a\"an\"anen and others extended Lindstr\o m's characterizability program to classes of infinitary logic systems, including a recent paper by M. D\v{z}amonja and J. V\"a\"an\"anen on Karp's chain logic, which satisfies interpolation, undefinability of well-order, and is maximal in the class of logic systems with these properties. The novelty of the chain logic is in its new definition of satisfability. In our paper, we give a framework for Lindstr\o m's type characterizability of predicate logic systems interpreted semantically in models with objects based on measures (analytic structures). In particular, H\'{a}jek's Logic of Integral is redefined as an abstract logic with a new type of H\'{a}jek's satisfiability and constitutes a maximal logic in the class of logic systems for describing analytic structures with Lebesgue integrals and satisfying compactness, elementary chain condition, and weak negation.

\textit{Keywords:} Lindstr{\o}m Characterizability, Abstract Logic, Analytic Structures, Measures, Lebesgue Integrals
%\begin{keyword}
    
%\end{keyword}

\end{abstract}

%%Graphical abstract
%\begin{graphicalabstract}
%\includegraphics{grabs}
%\end{graphicalabstract}

%%Research highlights
%\begin{highlights}
%\item Research highlight 1
%\item Research highlight 2
%\end{highlights}

%\begin{keyword}
%% keywords here, in the form: keyword \sep keyword
%Lindstr{\o}m Characterizability, Abstract Logic, Analytic Structures, Measures, Lebesgue Integrals
%% PACS codes here, in the form: \PACS code \sep code
%\PACS 0000 \sep 1111
%% MSC codes here, in the form: \MSC code \sep code
%% or \MSC[2008] code \sep code (2000 is the default)
%\MSC 0000 \sep 1111
%\end{keyword}

%\end{frontmatter}

%% \linenumbers

%% main text
\section{Introduction}
\label{sec:sample1}

In \cite{lindstrom1969}, Per Lindstr\o m elaborated criteria of the first-order characterizability of formal theories according to his previous research from \cite{lindstrom1966} on the so-called Lindsr\o m's quantifiers. Due to this theorem- a logical system shares the same expressive power with the first-order (elementary) logic when both the compactness theorem and downward Skolem-Loewenheim Theorem hold for it. 
The purely semantic proof of Lindstr\o m's theorem initiated research on the so-called abstract model theory - axiomatically initially depicted in \cite{barwise1977} and broadly described in  \cite{barwisefeferman} and in many other papers and monographs, such as \cite{ebbinghaus1985}.   

In essence, Lindstr\o m's results from the 60s strongly influenced two parallel research paths in the conceptual framework of abstract model theory. The first path -- renewed in the spirit of results from \cite{lindstrom1966} -- refers to infinitary logic -- very recently discussed in \cite{dzamonja} in some reference to Shelah's infinitary logic from \cite{shelah2012}  -- and generalized quantifiers initially introduced in \cite{mostowski} and broadly discussed, for instance, in \cite{barwisecooper}. The second research path refers to the original Lindstr\o m's theorem itself from \cite{lindstrom1969} and includes all the theorem's reconstruction attempts. 

Lindstr\o m's theorem has been extrapolated for various logical systems, for instance, for modal logic systems as in \cite{benthem2007,derijke}. Simultaneously, a significant part of the proof machinery has been incorporated into the model theoretical structure of functional analysis - due to ideas from \cite{henson,hensonheinrich}. In the same spirit, the Lindstr\o m's type characterization of analytic structures with a new approximation satisfaction relation was elaborated in \cite{iovino,xcaicedo}.

\subsection{The Paper Motivation and its Objectives}
 
Although the model-theoretic treatment of (even sophisticated) analytic structures, such as Banach spaces, seems to be well-grounded since the earlier works from the '70s, such as: \cite{henson,hensonheinrich,shelah},  Lindstrom's-type characterization itself of the analytic structures has been elaborated for a small number of types of the structures, such as continuous metric spaces \cite{benyacov,xcaicedo}. More precisely, the maximality logic and Lindstr\o m theorem were established here for the so-called Pavelka-Lukasiewicz logic - due to \cite{hajek1998book}. Unfortunately, no attempt at the first-order characterizability of analytic structures based on measures has yet been proposed. In particular, we have no information about the appropriate maximal logic suitable to describe analytic structures with integrals based on Lebesgue or - more generally - Radon's measure. Fortunately, a convenient formal bridgehead for such a construction has already been proposed in \cite{hajek1998book} in the form of the so-called \textit{Rational Pavelka Predicate Logic with Integrals} -- for simplicity -- to be called \textit{H\'{a}jek Integral Logic} in the paper. 

Due to this lack and shortcoming -- this paper is aimed at: 
\begin{description}
\item[$\bullet$] proposing a new abstract logic concept-based depiction of H\'{a}jek Integral Logic (HLI),
\item[$\bullet$] formulating and proving the Lindsr\o m's type theorem for the appropriate 'minimal' logic associated with the abstract logic --  previously defined for HLI.   
\end{description}
This paper forms an extended, and modified version of the conference paper \cite{jobczyk2021}.

\section{The terminological framework of the paper analysis}

Before we move to the proper part of the paper, we put forward a general conceptual framework for further analysis. At first - the formal definition of the abstract logic of a given signature and a couple of the close-related concepts will be recalled. 
Secondly, H\'{a}jek Integral Logic  -- as a unique extension of the so-called Predicate Pavelka-H\'{a}jek Logic -- will be described both syntactically and semantically.

\subsection{Abstract logic and its model-theoretic properties}

The formal definition of abstract logic - as a core notion of abstract model theory - was elaborated by P. Lindstr{\o}m in his famous work \cite{lindstrom1969}. The current depiction of this concept incorporates an approach from \cite{iovino}.

\begin{ardef} A\textit{logic} $\mathcal{L}$ is a triple $(\mathcal{K},\, Sent_{\mathcal{L}}, \models_{\mathcal{K}})$, 
where $\mathcal{K}$ is a class of structures of a given type\footnote{P. Lindstrom considered classical, i.e. discrete structures in \cite{lindstrom1966}. In \cite{xcaicedo}, continuous metric structures were considered.} closed under isomorphism, renaming and reducts, $Sent_{\mathcal{L}}$ is a function which assigns to each vocabulary $S$ a set $Sent_{\mathcal{L}}(S)$, i.e. a set of $\mathcal{S}$-sentences of $\mathcal{L}$ and $\models_{\mathcal{L}}$ is a satisfaction relation such that the following condition hold:
\begin{enumerate}
    \item $S\subseteq S^{'}$, then $Sent_{\mathcal{L}}(S)\subseteq Sent_{\mathcal{L}}(S^{'})$ (monotonicity of $Sent_{\mathcal{L}}$-operator),
    \item If $M\models_{\mathcal{L}}\phi$ (a formula $\phi$ and M remain in $\models$-relation), then there is a vocabulary $S$ such that M is an S-structure in $\mathcal{K}$ and $\phi$ is an S-sentence.  
    \item (Isomorphism property) If $M\sim N$ are isomorphic structures in $\mathcal{K}$ and $\phi$ is an S-sentence, then $\phi^{N} = \phi^{M}$.
    \item (Reduct property) If $S\subseteq S^{'}$, where both $S$ and $S^{'}$ are vocabularies, and $M$ is an $S^{'}$-structure in $\mathcal{K}$, then 
    \begin{equation*}
        M\models_{\mathcal{L}}\phi\,\,\,\mathrm{if\,\, and\,\, only\,\, if}\,\, (M\vert S)\models_{\mathcal{L}}\phi.
        \end{equation*}
    \item (Renaming property) If $\rho: S\to S^{'}$ is a renaming between vocabularies $S$ and $S^{'}$, then for each $S$-sentence $\phi$ there exists $S^{'}$-sentence $\phi^{\rho}$ such that
     \begin{equation*}
        M\models_{\mathcal{L}}\phi\,\,\mathrm{if\,\,and\,\,only\,\,if}\,\, M^{\rho}\models_{\mathcal{L}}\phi^{\rho},   \end{equation*}
    for each S-structure $M$ in the structure class $\mathcal{K}$\footnote{As usual, $M^{\rho}$ denotes $S^{'}$-structure obtained from $M$-structure by its converting through $\rho^{'}$.}. 
\end{enumerate}
\end{ardef}
As usual, if $M\models_{\mathcal{L}}\phi$, we say that $\phi$ is satisfied in $M$ and $M$ is said to be a \textit{model} for $\phi$. 
\begin{ardef}
An abstract logic $\mathcal{L}$ is said to be \texttt{closed under conjunction} if for all $\mathcal{L}$-sentences $\phi, \psi$ there is an $\mathcal{L}$-sentence $\phi\wedge \psi$ such that

\begin{equation*}
    M\models_{\mathcal{L}}\phi\wedge M\models_{\mathcal{L}}\psi\,\,\mathrm{if\,\,and \,\, only\,\, if}\,\, M\models_{\mathcal{L}} \psi\wedge \phi.
\end{equation*}
\end{ardef}
\begin{ardef}
An abstract logic is said to be \texttt{closed under negation} if for all $\mathcal{L}$-sentences $\phi$: 
\begin{equation}
    M\models_{\mathcal{L}}\neg\phi\,\mathrm{if\,\,and\,\ only\,\, if}\,\, M\not\models_{\mathcal{L}}\phi,
\end{equation}
for all M-structures in $\mathcal{K}$.
\end{ardef}
The general definition of abstract logic $\mathcal{L}$ allows us to define a (formal) \textit{theory} $T$ of $\mathcal{L}$.
\begin{ardef}
Let $\mathcal{L}$ be an abstract logic and let $S$ be a vocabulary. 
 An $S$-theory (or simply: a theory) $T$ is a set of all $S$-sentences of $\mathcal{L}$. 
\end{ardef}    
    \begin{ardef}
     Let $T$ be an S-theory of $\mathcal{L}$. If also $M$ be an S-structure in $\mathcal{K}$ such that $M\models_{\mathcal{L}}\phi$, for each $\phi\in T$, then $M$ is a \texttt{model} for $T$, what we denote by $M\models_{\mathcal{L}} T$. 
\end{ardef}
\begin{theorem}
If $T$ is consistent, then it has a model.
\end{theorem}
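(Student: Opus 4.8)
The plan is to prove this by a Henkin-style canonical model construction, adapted to the graded (Rational Pavelka) setting and to the measure-theoretic vocabulary underlying $\mathcal{L}$. Throughout I read ``consistent'' in the syntactic sense of the Pavelka--H\'ajek proof calculus attached to HLI, so that the statement is really the strong graded completeness theorem for that calculus transported to the present abstract-logic presentation; one then recovers the purely semantic formulation via the reduct and renaming clauses of the definition of a logic.

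\emph{Step 1 (fuzzy Lindenbaum / Henkinisation).} Given a consistent $S$-theory $T$, enlarge $S$ by $|S|+\aleph_{0}$ fresh constants and extend $T$ to a theory $T^{*}$ that is simultaneously Henkin-complete — for each formula $\exists x\,\varphi(x)$ occurring, a witnessing constant $c$ with $\varphi(c)$ is present — and linearly complete in the many-valued sense: for every sentence $\varphi$ and every rational $r$, $T^{*}$ decides whether the provability degree of $\varphi$ is $\ge r$. This is the standard Pavelka Lindenbaum argument: enumerate all pairs $(\varphi,r)$ and at each stage add the graded axiom $(\varphi,r)$ or its complement, using that consistency is preserved by exactly one of these choices; the $\int$-construct of HLI is handled by additional clauses demanding witnesses for the fresh symbols naming the measure and its integral.

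\emph{Step 2 (canonical analytic structure).} Build $M$ from the closed terms of the enlarged vocabulary modulo provable equality, and interpret each relation and function symbol by its provability degree in $T^{*}$. The delicate point is the analytic part: the measure symbol must receive a genuine (finitely additive, and on the relevant $\sigma$-algebra $\sigma$-additive) measure as interpretation, and the $\int$-operator the corresponding Lebesgue integral. Here one uses that $T^{*}$ proves all of H\'ajek's measure axioms (monotonicity, additivity on disjoint arguments, normalisation) and the linearity and monotone-convergence schemata for $\int$; by graded completeness of the arithmetical core these provable facts become the defining identities of a bona fide measure space, and a Carath\'eodory-type extension (equivalently, the $\sigma$-complete MV-algebra structure already present) upgrades finite additivity to the countable additivity required for membership in $\mathcal{K}$.

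\emph{Step 3 (truth lemma and conclusion).} By induction on sentence complexity, show $\varphi^{M}$ equals the provability degree of $\varphi$ in $T^{*}$: the atomic case is by construction; connectives use closure of $\mathcal{L}$ under the H\'ajek connectives (the closure-under-conjunction and closure-under-(weak) negation clauses above); the quantifier case uses Henkin-completeness; and the integral case uses Step 2 together with the monotone-convergence schema to evaluate $\left(\int f\right)^{M}$ as the integral of $f^{M}$ against the interpreted measure. Since each $\varphi\in T\subseteq T^{*}$ has provability degree $1$, the truth lemma yields $M\models_{\mathcal{L}}\varphi$ for all $\varphi\in T$, i.e.\ $M\models_{\mathcal{L}}T$; restricting $M$ to $S$ by the reduct property gives the required $S$-model in $\mathcal{K}$.

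The main obstacle I anticipate is Step 2: guaranteeing that the syntactic data of $T^{*}$ genuinely assemble into an \emph{analytic} structure lying in $\mathcal{K}$ — in particular that the interpreted $\int$ is a Lebesgue integral for the interpreted measure and that no finitely-but-not-countably additive set function survives. If $\mathcal{K}$ is required to carry Radon or Lebesgue measures, one may have to close the construction further (pass to a measure completion, or to the $\sigma$-algebra generated by the definable sets and invoke an extension theorem) and then re-verify that this closure leaves the already-computed truth values of Step 3 untouched. By contrast, the many-valued Lindenbaum step of Step 1 is routine once one checks that adding a graded axiom or its complement preserves consistency, which is the classical Pavelka computation.
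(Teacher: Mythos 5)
The first thing to say is that the paper offers no proof of this theorem at all: it is stated as a bare assertion immediately after Definition 5, and at that point the paper has not even defined ``consistent.'' The only definition of consistency that ever appears is the semantic one in Section 5 (``a theory $\sigma$ of $\mathcal{L}$ is consistent if there exists a structure $\mathcal{M}$ of $\mathcal{L}$ which approximately satisfies every sentence''), under which the theorem is true essentially by unfolding the definition. So your Henkin-style argument is aimed at a genuinely different and much stronger statement --- syntactic consistency in the Pavelka--H\'ajek calculus implies existence of a model in $\mathcal{K}$ --- which is a graded completeness theorem, not the (near-vacuous) claim the paper actually makes. That mismatch of readings is the main issue to flag before assessing the proof itself; in the abstract-logic framework of Lindstr{\o}m-type theorems there is no proof system attached to $\mathcal{L}$ in general, so a purely syntactic notion of consistency is not even available at the level of generality of Definitions 1--5.

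Within your own syntactic reading, the outline follows the standard Pavelka Lindenbaum--Henkin strategy and Steps 1 and 3 are plausible, but Step 2 contains the genuine gap, and it is exactly the one you anticipate. H\'ajek's completeness theorem for the logic with $\int$ is proved only with respect to \emph{weak} probabilistic models, i.e.\ structures carrying a ``semantic integral'' $\oint$ on the $\mathcal{F}$-algebra of definable fuzzy sets satisfying the axioms $(\mu 1)$--$(\mu 5)$; the canonical construction delivers such an $\oint$ from provability degrees, but it does \emph{not} deliver a genuine $\sigma$-additive measure $\mu$ with $\oint f = \int f\,\mathrm{d}\mu$. Upgrading a finitely additive state on the MV-algebra of definable fuzzy sets to an integral against an actual measure is a nontrivial representation problem (of Horn--Tarski/Kroupa--Panti type), not a routine Carath\'eodory extension, and performing any such extension after the fact risks changing the value of $\left(\int\varphi\,\mathrm{d}x\right)^{M}$ computed in Step 3. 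A secondary gap: Pavelka completeness yields provability degrees as suprema that need not be attained, so ``each $\varphi\in T$ has provability degree $1$'' does not immediately give $\varphi^{M}=1$ in the canonical model; one must either argue attainment or weaken the conclusion to approximate satisfaction, which is in fact the notion the paper itself adopts in Section 5.
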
 

\subsection{From {\L}ukasiewicz Logic to H\'{a}jek Integral Logic}

The conceptual tissue of abstract logic in its model-theoretic depiction was adopted in \cite{xcaicedo} to describe the Pavelka-H\'{a}jek Logic (in both the propositional and the predicate variant) on a base of \L ukasiewicz Logic. 
In the second part of this chapter, H\'{a}jek Logic of Integrals -- as an extension of the predicate Pavelka-H\'{a}jek logic -- is presented due to \cite{hajek1998book}. Whereas Pavelka-H\'{a}jek Logic in its two variants will be already described in terms of a conceptual tissue -- based on  the abstract logic concept -- due to \cite{iovino}, H\'{a}jek Logic of Integrals alone will be still presented classically -- due to \cite{hajek1998book}. The appropriate abstract logic concept-based apparatus for it will be elaborated in Sections 3 and 4.

\subsubsection{Pavelka Rational Logic}

Predicate Pavelka Rational Logic (PrePRL) constitutes a conservative extension of (predicate) Infinitely Valued \L ukasiewicz  Logic -- due to \cite{shepherdson} --- and extends this system by associating truth constants for rational in [0,1]. 

A similar relation holds between the propositional Infinitely Valued \L ukasiewicz  Logic and \textit{Pavelka Rational Logic} (PRL) as a propositional logic system, which forms the main subject of this paragraph. This similarity manifests itself in the way of defining PRL as an abstract logic. In fact, the appropriate class of structures for PRL is built up from the class of continuous metric structures being the appropriate class of structures for \L ukasiewicz Logic -- due to \cite{xcaicedo}. In addition, predicates of a PRL language take values from the closed interval $[0,1]$. 

Because of the fuzzy nature of PRL -- the usual $\models$-relation for an assignment relation $\mathcal{V}$ will be exchanged for an $\mathcal{V}$-assignment, which forms a counterpart of the so-called \textit{truth degree} defined in \cite{hajek1998book} and plays a role of a 'fuzzy satisfaction relation'. PRL as an abstract logic is introduced in two steps. At first, its 'surrogate' in the form of a \textit{Weak Pavelka Rational Logic} (WPRL) is put forward as a basis for the proper construction of PRL.

% This fact will be also reflected in the current presentation of PHL in terms of abstract logic as we refine this definition - as previously described due to \cite{xcaicedo} - taking into account continuous structures (like \L ukasiewicz $\min$-function) and assuming that all predicates of a PHL-language take values from the closed interval $[0,1]$. 

\begin{ardef} \texttt{A Weak Pavelka Rational Logic (WPRL)}\footnote{The name has not been used yet in the literature, although the current definition is introduced as defining [0,1]-\textit{valued logic} in def. 1.10. See: \cite{xcaicedo}, pp.1175-76.} is a triple

$(\mathcal{K},\, Sent_{\mathcal{WPRL}}, \mathcal{V})$, 
where $\mathcal{K}$ forms a class of metric continuous structures closed under isomorphism, renaming and reducts, $Sent_{\mathcal{L}}$ is a function which assigns to each vocabulary $S$ a set $Sent_{\mathcal{WPRL}}(S)$, i.e. a set of $\mathcal{S}$-sentences of $\mathcal{L}$  such that the following condition hold:
\begin{enumerate}
    \item $S\subseteq S^{'}$, then $Sent_{\mathcal{WPRL}}(S)\subseteq Sent_{\mathcal{WPRL}}(S^{'})$,
    \item A relation $\mathcal{V}$ assigns to each pair $(\phi, M)$, where $\phi$ is S-sentence and $M$ is S-structure in $\mathcal{K}$, a real number $\phi^{M}\in [0,1]$.   
    \item (Isomorphism property for WPRL) If $M\sim N$ and they are metrically isomorphic and $\phi$ is an S-sentence of $\mathcal{WPRL}$, then $\phi^{M} = \phi^{N}$.
    \item (Reduct property for WPRL) If $S\subseteq S^{'}$, where both $S$ and $S^{'}$ are vocabularies, and $M$ is an $S^{'}$-structure in $\mathcal{K}$ and $\phi$ is an S-sentence, then 
    $\phi^{M} = \phi^{M\restriction S}$ ($M\restriction S$ denotes a reduct $M$ to $S$).
    \item (Renaming property for WPRL) If $\rho: S\to S^{'}$ is a renaming between vocabularies $S$ and $S^{'}$, then for each $S$-sentence $\phi$ there exists $S^{'}$-sentence $\phi^{\rho}$ such that
     $\phi^{M} = (\phi^{\rho})^{M^{\rho}}$
    for each S-structure $M$ in the structure class $\mathcal{K}$\footnote{As usual, $M^{\rho}$ denotes $S^{'}$-structure obtained from $M$-structure by its converting through $\rho^{'}$.}. 
\end{enumerate}
\end{ardef}
\begin{ardef}
If M is S-structure from $\mathcal{K}$, and $\phi\in Sent_{\mathcal{WPRL}}$,  $\phi^{M} = 1$, then we say that M \texttt{satisfies} $\phi$ and write $M\models_{\mathcal{WPRL}} \phi$.  
\end{ardef}

Instead of $\mathcal{V}$, we will sometimes write $e$ to express the same assignment, in particular  - in the context when the true value of PRL-formulae is considered\footnote{It is not easy to see that this definition of abstract Pavelka Rational Logic is structurally similar to the Kripke frame-based depiction of semantics for many fuzzy logic systems, such as \textbf{S5}[0,1]- due to \cite{hajek1998book}. In fact, their semantics is usually given by the triples: $(W, R, e)$, where $W$ is a non-empty set of states, $R$ is an accessibility relation $\subseteq W\times W$ and $e: \mathcal{L}\to [0,1]$ is an evaluation function, for a given language of a given fuzzy logic system.}.

For a complementary definition of PRL on a base of WPRL, one needs to close the abstract logic $(\mathcal{K},\, Sent_{\mathcal{WPRL}},\, \mathcal{V})$ under some connectives. The key connective for each [0,1]-valued logic -- based on \L ukasiewicz logic -- is the so-called \textit{\L ukasiewicz implication}. It forms a  function from $[0,1]^{2}$ into $[0,1]$ defined by the clause: 
\begin{equation}
    x\to_{L} y = \min\{1 - x+ y, 1\}.
\end{equation}
It is noteworthy to state that $x\to_{L}y = 1$ if and only of $x\leq y$.

  \begin{ardef} We say a [0,1]-valued logic $\mathcal{L}$ is
\texttt{closed under the basic} \texttt{connectives} if and only if the following condition holds, for each vocabulary $S$:
\begin{enumerate}
    \item If $\phi, \psi\in Sent_{\mathcal{L}}(S)$, then there exists a sentence $\phi\to_{L}\psi\in Sent_{\mathcal{L}}(S)$ such that $(\phi\to_{L} \psi)^{M}$ if and only if $\phi^{M}\to_{L}\psi^{M}$, for every $S$-structure $M$.  
    \item For each element $r \in [0,1]$, the set $Sent_{\mathcal{L}}(S)$ contains a sentence with constant truth value $r$ to be called the constants of $\mathcal{L}$ and denoted by $\bar{r}$, i.e. $e(\bar{r}) = r$. 
\end{enumerate}
\end{ardef}
Having defined closeness under basic connectives we are in a position to define Pavelka Rational Logic as an abstract logic as follows.

\begin{ardef}
A \texttt{Pavelka Rational Logic}(PRL) is a triple $(\mathcal{K},\, Sent_{\mathcal{WPRL}},\, \mathcal{V})$ defined as in Def. 6 and closed under basic connectives. 
\end{ardef}

From the perspective of [0,1]-evaluation of formulae, the following observation seems to be noteworthy.  

\begin{remark}
Let M be an S-structure for abstract logic $\mathcal{L}=\mathcal{PRL}$. Then for any $\phi\in Sent_{\mathcal{PRL}}(S)$ hold: 
\begin{enumerate}
    \item $M\models_{\mathcal{L}} \phi\to_{L} \bar{r}\iff \phi^{M}\to r$ (alternatively: $e(\bar{r}\to_{L} \phi)= 1\iff e(\phi)\geq r$). 
    \item $M\models_{\mathcal{L}} \bar{r}\to_{L} \phi\iff r\to_{L} \phi^{M}$ (alternatively: $e(\phi\to_{L} \bar{r})= 1\iff e(\bar{r})\geq \phi$). 
\end{enumerate}
\end{remark} 
Note that for every $S$-structure $M$ it holds\footnote{These properties correspond to the axiom-based depiction of PRL - due to \cite{hajek1998book}:
\begin{enumerate}
    \item $(\bar{r}\to \bar{s} \equiv \bar{r\Rightarrow s})$
    \item $\neg\bar{r} \equiv \bar{1- r}$
\end{enumerate}}:
\begin{eqnarray*}
     (\neg\phi)^{M} = 1 - \phi^{M},\,\,
     (\phi\vee \psi)^{M} = \min\{\phi^{M}, \psi^{M}\},\,\,
     (\phi\wedge \psi)^{M} = \max\{\phi^{M}, \psi^{M}\}.
\end{eqnarray*}

It is noteworthy to underline that the \textit{truth value} of each 
$\phi\in Sent_{\mathcal{PRL}}$ is determined either by the set
\begin{equation}
    \{r\in \mathcal{Q}\cap [0,1]: M\models_{\mathcal{PRL}} \phi\to_{L} \bar{r}\} = 
\{r\in \mathcal{Q}\cap [0,1]: e(\bar{r}\to_{L} \phi) = 1\}
\end{equation}
or by the set 
\begin{equation}
  \{r\in \mathcal{Q}\cap [0,1]: M\models_{\mathcal{PRL}} \bar{r}\to_{L} \phi\}\footnote{Obviously, one can reformulate the definitions of the sets by using $\leq$-relation.}.  
\end{equation}
It relies on the observation that truth value of the formula (by fuzzy evaluation) is not too false, i.e. its truth value is \textit{at least} equal to $r$ (if $\bar{r}\to_{L} \phi$). Similarly, if $\phi\to_{L} \bar{r}$, then the truth value of $\phi$ is not too true, i.e. its truth value is \textit{at most} equal to $r$. 

\subsubsection{Predicate Pavelka Rational Logic (PrePRL)}

\textit{Predicate Pavelka Rational Logic} (PrePRL) forms a predicate enhancement of Pavelka Rational Logic, obtained from PRL if exchanged its propositional language is the predicate one. In this paragraph, we intend to provide an abstract logic-based depiction of PrePRL constructing the appropriate class of structures and a new set of sentences -- due to the taxonomy of terms and formulae in each predicate language. The construction itself will be proceeded by explaining the direction of the construction and the nature of the constructed structures.\\ 

\noindent\textbf{I. A couple of explanatory remarks.} Before we move to details of the construction, some explanatory remarks concerning the direction of this construction and the nature of the structures themselves should be previously made.
\begin{enumerate}
    \item While a majority of two-valued predicate logics include the predicate "=" among their formulae -- interpreted semantically by identity relation, a multi-valued predicate logic system should include a sign "$\approx$" intentionally interpreted in semantics by  \textit{similarity relation} as fuzzy identity. PrePRL forms an example of such a system. 
    \item  Meanwhile, we generally have two possibilities to formally grasp the '='-relation symbol in a predicate language. The first way relies on listing its formal properties: reflexivity, symmetry and transitivity -- as this way stems from the observation that it forms just an equivalence relation. The alternative, stronger way relies on defining the appropriate congruences\footnote{Note that each congruence is a unique equivalence relation. It justifies why this method may be described as the stronger one.} for formulae of the predicate language. 
\end{enumerate}
More precisely, for variable symbols $x_{1}, \ldots x_{n}$ and $y_{1},\ldots, y_{n}$, the '='-relation symbol is described by the following conjunction of congruences for each $n$-ary operation symbol $f$:

\begin{equation}
   (x_{1}= y_{1}\wedge\ldots x_{n}= y_{n}) \to (f(x_{1},\ldots, x_{n})= f(y_{1},\ldots, y_{n}).  
\end{equation}
and for $n$-ary predicate $P$:
\begin{equation}
  (x_{1}= y_{1}\wedge\ldots x_{n}= y_{n}) \to (P(x_{1},\ldots, x_{n})\to_{L} P(y_{1},\ldots, y_{n})).    
\end{equation}
The same concept of congruence -- as a fuzzy counterpart of identity -- may be adopted to grasp $\approx$-relation symbol -- semantically interpreted by similarity relation as follows: 
\begin{equation}
   (x_{1}\approx y_{1}\wedge\ldots x_{n}\approx y_{n}) \to_{L} (f(x_{1},\ldots, x_{n})\approx f(y_{1},\ldots, y_{n}).  
\end{equation}
and for $n$-ary predicate $P$:
\begin{equation}
  (x_{1}\approx y_{1}\wedge\ldots x_{n}\approx y_{n}) \to_{L} (P(x_{1},\ldots, x_{n})\approx P(y_{1},\ldots, y_{n})).    
\end{equation}

\noindent\textbf{II. PrePRL as an abstract logic.} In order to provide an abstract logic concept-based depiction of PrePRL, let us repeat that PRL is semantically represented by continuous metric spaces with the 1-Lipschitz condition (see: \cite{xcaicedo}, pp.1173, 1179.)\footnote{The authors of the paper say about \L ukasiewicz-Pavelka Logic.}. 
\begin{ardef}{\textbf{The 1-Lipschitz condition}.}
Let $S$ be a class of continuous metric structures, and let $M$ be such an $S$-structure. For all $n$-ary predicates $P$, all $n$-ary function symbols $f$, $\bar{a} = a_{1},\ldots a_{n}\in M$, $\bar{b} = b_{1},\ldots b_{n}\in M$
\begin{equation}
    d(f(\bar{a}),f(\bar{b})), \leq \sup_{1\leq i\leq n}d(a_{i}, b_{i}),
\end{equation}
\begin{equation}
    d(P(\bar{a}),P(\bar{a}))\leq \sup_{1\leq i\leq n}d(a_{i}, b_{i}).
\end{equation}
\end{ardef}
Meanwhile, the similarity relation $x\approx y$ corresponds to the distance $d(x,y)$ such that $d(x,y) = 1- x\approx y$, i.e. both relations are their mutual negations\footnote{It is explainable in the light of the observation that a large distance between two elements $x$ and $y$ means a small similarity between them and conversely.}. It infers a unique similarity-based Lipschitz condition in the following form.
\begin{ardef}{\textbf{The Similarity-based Lipschitz Condition}\footnote{In fact, we could name this property as an anti-contraction}.} Let $f$ be a function symbol, S - be a class of structures with similarity relation $\approx$, $M$ be an S-structure and $\bar{a} = a_{1},\ldots, a_{n}\in M$ and $\bar{b} = b_{1},\ldots, b_{n}\in M$. Then it holds:
\begin{equation}
  (\bar{a}\approx \bar{b})\leq \inf_{1\leq i\leq n}\{ f(a_{i})\approx f(b_{i})\}\footnote{Directly from the relation $d(x,y) = 1 - x\approx y$, one can infer that $f(\bar{a})\approx f(\bar{b}) \leq sup_{1\leq i\leq n}\{a_{i}\approx b_{i}\}$. After some modifications, it also allows us to infer the above condition.}.
\end{equation}
\end{ardef}
As a result, the adequate $S$-class for PrePRL is a class of continuous, similarity-based Lipschitz structures. 

Having already established the class of the adequate structures for PrePRL, we are in a position to introduce a set of PrePRL-sentences $Sent_{\mathcal{PrePRL}}$ -- defined similarly like a set of formulae of a predicate language. More precisely, we will identify $Sent_{\mathcal{PrePRL}}$ with a set of formulae of $\mathcal{L}(PrePRL)$, which is built up from terms of $\mathcal{L}(PrePRL)$, as usual. 
\begin{ardef}{(\textbf{Terms of} $\mathcal{L}(PrePRL)$}. Let us assume that a class of structure (of a type) $S$ is given and let I be a non-empty set of indices. 
The set of $S$-terms $Tm_{\mathcal{L}(PrePRL)}$ of $\mathcal{L}(PrePRL)$ is defined inductively as follows:
\begin{equation}
 Tm_{0} = X \cup C,  
\end{equation}
where $X$ is a finite set of variables and $C$ is a set of constants, 
\begin{equation}
Tm_{l+1} = Tm_{l} \cup \{\langle f_{j}(t_{1},\ldots, t_{m}) \rangle\},  
\end{equation}
where $f_{j}$ is an $j$th function  symbol, for $j\in I$, and 
\begin{equation}
    Tm = \bigcup_{l\in\mathbb{N}}Tm_{l}.
\end{equation}
\end{ardef}
\begin{ardef}{(\textbf{Formulae of $\mathcal{L}(PrePRL)$})}.
Let us assume that a class of structures (of a type $S$) with an $S$- model $M$ is given, let $I$ be a (non-empty) set of indices and let $P_{i}$ be $i$th  $n$-ary predicate and $\widehat{r}$ forms a constant symbol for a rational $r$.     
The set $Form_{\mathcal{L}(PrePRL)}$ is defined inductively as follows:\\
\begin{eqnarray*}
 Fm_{0} =\{\langle t, \approx, s \rangle: t,s\in Tm_{\mathcal{L}(PrePRL)}\}\cup\\
 \cup\{P_{i}(t_{1},\ldots, t_{n}): i\in I\,\, \mathrm{and}\,\, t_{1},\ldots t_{n}\in Tm\}\\ \cup\{\widehat{r}: \widehat{r}^{M} = r\in \mathbb{Q}\cap [0,1]\}, 
\end{eqnarray*}
\begin{eqnarray*}
 Fm_{l+1} = Fm_{l}\cup \{\neg F: F\in Fm_{l}\}\cup\\
 \cup\{F\to_{L} G: F,G\in Fm_{l}\}\cup\\
 \cup\{\forall_{x_{n}}F: F\in Fm_{l}\,\, \mathrm{and}\,\, n\in\mathbb{N}\}.   
\end{eqnarray*}
\end{ardef}

Having established a class of formulae $\mathcal{L}(PrePRL)$, we are in a position to define PrePRL as an abstract logic. 
\begin{ardef}{(\textbf{Predicate Pavelka Rational Logic (PrePRL)}.)} Let $S$ be a predicate vocabulary.
\texttt{A Predicate Pavelka Rational Logic} is a triple $(\mathcal{K}, Sent_{PrePRL}, \mathcal{V})$, where:
\begin{description}
\item[a)]$\mathcal{K}$ is a class of continuous, similarity-based Lipschitz structures closed under renaming, reducts and isomorphism, 
\item[b)] $Sent_{PrePRL}(S)$ is a class of formulae of $\mathcal{L}$(PrePRL) -- as defined in Def. 13,
\item[c)] a relation $\mathcal{V}$ assigned to each pair $(\phi, M)$, where  $\phi$ is $S$-sentence and $M$ is $S$-structure in $\mathcal{K}$, a real number $\phi^{M} \in [0,1]$, 
\end{description} 
which is closed on basic connectives and existential (general) quantifiers.
\end{ardef}

\section{Hajek Logic of Integrals}
In this section, HLI is introduced both syntactically and semantically -- due to its axiomatic depiction from \cite{hajek1998book} -- as an extension of Predicate Pavelka Rational Logic (see:\cite{hajek1998book,xcaicedo}). This depiction will constitute a convenient bridgehead to reformulate this system as an abstract logic.  \\  

\noindent\textbf{I. Syntax of HLI}. $HLI$ is defined both syntactically and semantically in \cite{hajek1998book} in a language $\mathcal{L}(HLI)$ given by the grammar:
\begin{eqnarray*}
 \phi:= \phi\vert \neg \phi\,\, \vert\,\, \phi\vee \psi\,\,\vert\,\phi\veebar\psi\,\vert\, \phi\wedge\psi  \,\vert\, \phi \& \psi \,\vert\forall_{x} \phi\,\,\vert\,\, \int \phi \mathrm{d}x\,\vert\footnote{$\veebar$ denotes the so-called strong disjunction, but $\&$ denotes strong conjunction. Their semantic meaning will be explained in the context of semantic models for HLI. }\,\\\forall_{x}\phi=\forall_{y}\psi\,\vert\, \displaystyle\int\phi\mathrm{d}x=\displaystyle\int\psi\mathrm{d}y.   
\end{eqnarray*}

\noindent In other words, $\mathcal{L}(HLI)$ extends a significant part of PrePRL-language (without $\approx$-symbol exchanged for '=')\footnote{A sense of this modification follows from another approach to the representation of fuzziness, which constitutes a founding conceptual idea of HLI. Fuzziness is rendered less explicit and manifests itself, for example, by truth values of formulae of this system.} by a new quantifier $\displaystyle\int$ (read 'probably') and extends the definition of PrePRL-formulae\footnote{In fact, defining PrePRL in the previous chapter, and we said about \L ukasiewicz implication $\to_{L}$, which is definable either in terms of $\neg$ and strong disjunction $\veebar$ or - in terms of $\neg$ and strong conjunction $\&$.} by the clause informing that if $\phi$ is a formula and $x$ is a variable, then $\displaystyle\int \phi\mathrm{d}x$. 
 $HLI$ is \textit{syntactically} determined in $\mathcal{L}(HLI)$ by:
\begin{description} 
\item[$\bullet$] Axioms:
\begin{description}
    \item[($\mu 1$)] $\displaystyle\int v\mathrm{d}x \equiv v$\,\, for $v$ not not containing $x$ freely,
\item[($\mu 2$)] $\displaystyle\int (\neg \phi)\mathrm{d}x \equiv \neg \displaystyle\int \phi\mathrm{d}x$, 
\item [($\mu 3$)] $\displaystyle\int (\phi\to \psi)\mathrm{d}x \equiv  \displaystyle\int \phi\mathrm{d}x\to\displaystyle\int \psi\mathrm{d}x$,
\item[($\mu 4$)] $\displaystyle\int (\phi\veebar\psi)\mathrm{d}x \equiv \Big((\displaystyle\int\phi\mathrm{d}x\to \displaystyle\int (\phi \& \psi)\mathrm{d}x)\to \displaystyle\int\psi\mathrm{d}x \Big)$,
\item[($\mu$5)] $\displaystyle\int(\displaystyle\int \phi\mathrm{d}x)\mathrm{d}y \equiv \displaystyle\int(\displaystyle\int \phi\mathrm{d}y)\mathrm{d}x$.\\
\end{description}
\item[$\bullet$] Inference rules: \textit{Modus Ponens}, substitution, generalization and
$$\frac{\phi}{\displaystyle\int \phi\mathrm{d}x}\,\,\,, \frac{\phi\to \psi}{\displaystyle\int \phi\mathrm{d}x\to \displaystyle\int\psi\mathrm{d}x}.$$
\end{description}
\begin{ardef} HLI -- in a language, $\mathcal{L}(HLI)$ -- is defined as the smallest logical systems consisting of axioms $(\mu 1)$-$(\mu 5)$ and closed under the above 'integral' inference rules and the inference rules of PrePRL (MP and generalization). 
\end{ardef}
\begin{exo}
\begin{enumerate}
    \item $\displaystyle\int(\displaystyle\int\neg\phi\veebar\psi\,\mathrm{d}x)\mathrm{dy}$ is well-founded $\mathcal{L}(HLI)$-formula, but $\displaystyle\int \phi\mathrm{d}x + \displaystyle\int\psi\mathrm{dx}$ does not\footnote{However, the additivity properties of integrals are rendered by the axioms. For example, finite additivity of integrals is rendered by $(\mu 4)$. It may be seen in the perspective of semantic interpretations of logical connectives by the appropriate $\min$ and $\max$ norms.}.
    \item $\forall _{x}\phi = \forall_{y}\psi$ is a legal $\mathcal{L}(HLI)$-formula, but $\phi= \psi$ does not. This formula is prohibited as '=' may be used only for quantifiers.  
\end{enumerate}
\end{exo}
\noindent\textbf{II. Semantics} -\cite{hajek1998book}, p. 238-240.
The semantics of HLI - described in \cite{hajek1998book}, pp. 238-240 -- was elaborated in terms of the so-called \textit{probabilistic models} of a general form:
\begin{equation}
    M = \langle\vert M\vert, R_{1}^{M},\ldots R_{k}^{M}, f_{1}^{M},\ldots, f_{n}^{M}, c_{1},\ldots, c_{l}\,, \mu \rangle, 
\end{equation}
where $\vert M\vert$ is a \textit{countable} or \textit{finite} set (the model universe), $R_{i}^{M}, f_{j}^{M}$, for $i\in 1, \ldots, k$, $j\in 1,\ldots,l$, interpret predicates and function symbols (\textit{resp.}) as usual (over the real unit $[0,1]$) and $c_{1}^{M},\ldots, c_{m}^{M}$ interpret constant symbols in the same way over the real unit $[0,1]$ and $\mu$ is a probability measure on $M$\footnote{That is $\mu$ is a function assigning to $m\in \vert M\vert$ a real $\mu(m)\in [0,1]$ such that $\sum_{m\in\vert M\vert} \mu(m) = 1$. For an arbitrary subset $A\subseteq \vert M\vert: \mu(A) = \sum_{m\in A} \mu(m).$ More generally, if $A\subseteq \vert M\vert^{n}$ then $\mu(A) = \sum\{\mu(m_{1})\mu(m_{2})\ldots\mu(m_{n}): \langle m_{1},\ldots, m_{n}\rangle\in A\}$.}

In fact, the proper model suitable to interpret HLI must form a refinement of the weak probabilistic model given by (18). Instead of $\mu$, one needs to consider the semantic counterpart of the integral formula. Let us repeat that the semantic integral\footnote{In further part, we will use simply the name 'integral' if it does not generate any confusion.} is defined as follows.

For each function $f: \vert M\vert \to [0,1]$ the (Lebesgue) integral $\displaystyle\int f \mathrm{d}\mu$ is defined by $\sum _{n\in\vert M\vert} f(m)\mu(m)$. More formally, if a measure space $(X, \mathcal{A}, \mu)$ is given, where $\mathcal{A}$ forms an $\rho$-algebra of subsets of $X$, then each finite, pairwise disjoint family $\{A_{1}, A_{2},\ldots, A_{n}\}\subseteq \mathcal{A}$ such that $\bigcup_{k=1}^{n} A_{k} = X$ is said to be a \textit{measurable dissection of $X$.} It allows us to define the \textit{Lebesque integral} of (a given) function $f$.
\begin{ardef}{\textbf{The Lebesgue integral}[Hewitt],p.164.} Let $f$ be any function from $X$ to $[0,\infty)$. Then the \textit{Lebesgue integral} $L(f)$ of $f$ is defined as follows.
\begin{eqnarray*}
    L(f) = \sup\Big\{\sum_{k=1}^{n} \inf\{f(x): x\in A_{k}\}\mu(A_{k}): \{A_{1},\ldots, A_{n}\}\\
    \mathrm{is\,\, a\,\, measurable\,\, dissection\,\, of\,\, X} \Big\}.
\end{eqnarray*}
 
\end{ardef}

It is noteworthy that if $f$ is two-valued, i.e. $range (f)\subseteq \{0,1\}$, then $\displaystyle\int f\mathrm{d}\mu = \mu(f^{-1}(1)) = \mu(\{m: f(m)=1\})$ (it is a measure of the set whose characteristic function is $f$).

These observations will be exploited now for defining the 'semantic integrals'\footnote{The authors of \cite{hajek1998book} are said to be called the 'weak integrals' because of being components of the so-called 'weak models'.} in the algebraic environment of $\mathcal{F}$-algebra of [0,1]-fuzzy subsets of weak probabilistic model. Its precise definition is as follows. 
\begin{ardef}{(\textbf{$\mathcal{F}$-algebra} of weak probabilistic models.)} Let M be a weak probabilistic model with a universe $\vert M\vert$. $\mathcal{F}$-algebra of (weak probabilistic) model M is a family of [0,1]-fuzzy subsets\footnote{Note that these sets are determined by their characteristic functions, so we can think about them as about their functions, what justifies this defining method.} of $\vert M\vert$ containing each constant rational function with the value $r\in [0,1]$ and closed under $\Rightarrow$ (if $f, g\in\mathcal{F}$ and $h(a) = f(a)\Rightarrow g(a)$, for all $a\in \vert M\vert$ then $h\in\mathcal{F}$.)\footnote{One can infer from this that $\mathcal{F}$ is closed under $\neg \wedge, \vee, \veebar$.(see: \cite{hajek1998book}, p.240.) It corresponds well with the closeness of the set of HLI formulae under their corresponding connectives.} 
\end{ardef}
\begin{ardef}{(\textbf{Semantic Integrals}).\cite{hajek1998book}, p. 240.} Let us assume that a weak probabilistic model M with a non-empty domain $\vert M\vert$ with its $\mathcal{F}$-algebra is given. A \textit{semantic integral} on $\mathcal{F}$is a mapping $\oint$ associating to each $f\in \mathcal{F}$ its Lebesgue integral -- constructed as described in Def. 16 -- $\displaystyle\oint f\mathrm{d}x\in [0,1]$ and satisfying the following conditions(if $f, g\in\mathcal{F}$, $k_{r}$ is the constant function with the value $r$, $h: \vert M\vert\times \vert M\vert \to [0,1]$ is such that for each $a, b\in \vert M\vert$, the functions $f_{b}(x) = h(x,b)$ and $g_{a}(y) = h(a, y)$ are both in $\mathcal{F}$):    
\begin{equation}
    \displaystyle\oint k_{r}\mathrm{d}x = r,
\end{equation}
\begin{equation}
 \displaystyle\oint (1- f)\mathrm{d}x = 1 - \displaystyle\oint f\mathrm{d}x,   
\end{equation}
\begin{equation}
 \displaystyle\oint(f\Rightarrow g(\mathrm{d}x \leq (\displaystyle\oint f\mathrm{d}x\Rightarrow \displaystyle\oint g\mathrm{d}x)   
\end{equation}
\begin{equation}
    \displaystyle\oint(f\oplus g)\mathrm{d}x = \displaystyle\oint f\mathrm{d}x + \displaystyle\oint g\mathrm{d}x - \displaystyle\oint (f \ast g)\mathrm{d}x,
\end{equation}
\begin{equation}
 \displaystyle\oint(\displaystyle\oint h\mathrm{d}x)\mathrm{d}y = \displaystyle\oint (\displaystyle\oint h\mathrm{d}y)\mathrm{d}x,\,\, \mathrm{if\,\, both\,\, sides\,\, defined.}   
\end{equation}
\end{ardef}
Having defined 'semantic integrals,' we are in a position to define the so-called \textit{weak probabilistic model} as a structure slightly modifying the probabilistic model.
\begin{ardef}{\textbf{Weak probabilistic model}.} Each structure of the type:
\begin{equation}
 M = \langle\vert M\vert, R_{1}^{M},\ldots R_{k}^{M}, f_{1}^{M},\ldots, f_{n}^{M}, c_{1},\ldots, c_{l}\,, \displaystyle\oint\rangle\,, 
\end{equation}
where all components are defined as previously and $\displaystyle\oint$ is a semantic integral defined on $\mathcal{F}$-algebra of fuzzy subsets of M. 
\end{ardef}
In order to build a correspondence between HLI-axioms and axioms $(\mu 1)$-$(\mu 5)$ for semantic integrals, we need to complement the standard truth value function $\Vert \bullet\Vert_{M}$  for $\mathcal{L}(HLI)$ expressions with fuzzy connectives by the new  clause for the integral-type formulae:
\begin{equation}
    \Vert \displaystyle\int\phi \mathrm{d}x\Vert_{M} = \displaystyle\oint\Vert \phi\Vert_{M, v}\mathrm{d}x,
\end{equation}
provided that $\Vert \phi\Vert_{M}\in \mathcal{F}$. (otherwise -- undefined).

\section{Towards H\'{a}jek Logic of Integrals as an Abstract Logic}

In order to describe HLI as an abstract logic -- due to \cite{lindstrom1966,xcaicedo} -- for the use of its Lindstr\o m's characterization, one needs to introduce a satisfaction relation for $\mathcal{L}(HLI)$-formulae in weak probabilistic models. In this chapter, a new satisfaction relation to be called \textit{H\'{a}jek's satisfiability} (symbolically: $\models^{H}$) for $\mathcal{L}(HLI)$ is introduced\footnote{The analysis of the chapter are inspired by Iovino's ideas from \cite{iovino}. However, Iovino's ideas refer to the standard definition of satisfaction relation and are not immersed in the analytic conceptual scenario as the current considerations.}.  

In a standard way, a fuzzy logic formula $\phi$ is satisfied in a given model $\mathcal{M}$, in its state $w$ if and only if 1 (true value) is associated with $\phi$ in $w$ by an evaluation function $e$. Formally,
\begin{equation}
    \mathcal{M},w\models\phi\iff e(\phi, w) = 1.
\end{equation}
We preserve this standard and restrictive convention for all $\mathcal{L}(HLI)$-expressions except for quantifier-based expressions with '=,' when we will relax our expectations concerning satisfaction conditions in weak probabilistic models because of the nature of the integral quantifier expressions. It finds its reflection in H\'{a}jek's satisfiability for a majority of $\mathcal{L}(HLI)$-formulae given by the clauses, which develop (10).

We proceed to introduce its definition by short reasoning because of $\&$ connective. Since $e(\phi\&\psi) = 1\iff\max(0, e(\psi, w)+ e(\phi, w) - 1) =1$ because of $\Vert \phi\&\psi\Vert_{M} = \max\{0, x+y - 1\}$, we can infer that it must hold: $e(\phi, w) + e(\psi, w) - 1 = 1\iff e(\phi, w) = 1$ and $e(\psi, w) = 1$\footnote{It follows from the fact that both $e(\psi,w), e(\phi, w) \in[0,1]$. This result may be alternatively inferred from the fact that $\phi\&\psi$ stands for $\neg(\phi\to\psi)$, thus $e(\phi\&\psi w)=1\iff e(\neg(\phi\to\psi),w) = 1$, i.e. $e(\phi,w)= 1, e(\psi,w) = 1$.}. It leads to the following definition.

\begin{ardef}(H\'{a}jek's satisfaction for $\mathcal{L}(HLI)$-formulae without '='.) Let $\mathcal{M}$ be a weak probabilistic model and $w\in\vert\mathcal{M}\vert$ with a set of constants $C$.  
\begin{enumerate}
    \item $(\mathcal{M},w)\models^{H} \phi\iff e(\phi, w) = 1$.
    \item $(\mathcal{M},w)\models^{H} \neg\phi\iff e(\phi, w) = 0$.
    \item $(\mathcal{M},w)\models^{H} \phi\wedge\psi\iff \min\{e(\psi,w),e(\phi,w)\}) = 1$\footnote{Obviously, this condition may be reformulated to the condition that $e(\phi, w) =1, e(\psi, w)$. However, such a formulation hides the provenance of the condition. It is noteworthy that the satisfaction relation itself is not sensitive to a semantic difference between fuzzy connectives}.
    \item $(\mathcal{M},w)\models^{H}\phi\vee\phi\iff\max\{e(\psi,w),e(\phi,w)\}) = 1.$
    \item $(\mathcal{M},w)\models^{H}\phi\&\phi\iff e(\phi, w) = 1, e(\psi, w) = 1$.
    \item $(\mathcal{M},w)\models^{H}\forall_{x}\phi(x)\iff \inf_{c\in C}\{e(\phi(c),w)\}=1.$ 
    \item $(\mathcal{M},w)\models^{H}\exists_{x}\phi(x)\iff \sup_{c\in C}\{e(\phi(c),w)\} = 1.$ 
    \item $(\mathcal{M},w)\models^{H}\displaystyle\int \phi\mathrm{d}x\iff
    e(\displaystyle\int \phi\mathrm{d}x,w) = 1$\footnote{This situation should not be identified with a fact when a given Lebesgue integral is computed in some limits, and it takes a value 1. This value plays here a role of the highest \textit{truth value} for its corresponding 'integral expression in the sense of this concept given by \cite{hajek1998book}, pp. 238-241.}. 
\end{enumerate}
\end{ardef}
\begin{exo}
\begin{enumerate}
    \item Let us assume that $e(\phi, w) = 0.9$ and $e(\psi,w) =1$ in some w. p. model. Obviously, $\mathcal{M},w\models^{H} \phi\vee \psi$, but neither $\mathcal{M},w\models^{H} \phi\&\psi$ nor $\mathcal{M},w\models^{H} \phi\wedge\psi$ because $e(\phi, w) = 0.9$ violates H\'{a}jek's satisfaction in these cases due to 3) and 4). 
    \item Let us consider a model $\mathcal{M}$ with $\mathcal{C}$ with $C=\{\bar{1},\bar{2}, \bar{3}\}$, such that
$e(\phi(\bar{1})) = 0.5$, $e(\phi(\bar{2})) = 0,8$ and $e(\phi(\bar{3}))=1$. Than $\mathcal{M}\models^{H}\exists_{x}\phi(x)$ (as $\sup_{c\in C}\{e(\phi(c))\} = 1)$, but $\mathcal{M}\not\models^{H}\forall_{x}\phi(x)$ (as $\inf_{c\in C}\{e(\phi(c))\} = 0.5\not= 1)$.  
\end{enumerate}
\end{exo}
It remains to complement the definition of H\'{a}jek's satisfiability for '=' with $\mathcal{L}(HLI)$ quantifiers, in particular -- with the integral quantifiers.

\subsection{H\'{a}jek's satisfiability}

For the use of H\'{a}jek's satisfiability of '=' with $\mathcal{L}(HLI)$ quantifiers, we intend to refer to the concept of measure because of a measure-based nature o Lebesgue integrals. Informally speaking, we want to accept $\mathcal{L}(HLI)$-expressions of the type $\displaystyle\int f\mathrm{d}\mu = \displaystyle\int g\mathrm{d}\mu$ as H\'{a}jek's satisfiable in w.p. models if the $\mu$-measures of the appropriate $\mathcal{M}$-truth value sets for the left-side and the right-side integral expressions are almost identical. The definition of H\'{a}jek's satisfiability will be inductive -- due to the increasing complication degree of the measure. Initially, we begin with a Dirac's measure case to generalize the situation later.

\textbf{\textit{I. H\'{a}jek's satisfiability for '=' in $\{0,1\}$-Dirac's case.}}

For the use of H\'{a}jek's satisfiability, we will refer to the slightly modified definition of Lebesgue integral\footnote{In fact, it forms an approximation of the proper definition of integral given by Def. 16. (see: \cite{hewitt}, p. 164).}:  
$$D_{f}:\oint f\mathrm{d}\mu = \sum_{m\in M}f(m)\mu(\{m:f(m) =1\}).$$

If range($f$)$\in\{0,1\}$, then we obtain
$$ \oint f\mathrm{d}\mu = \mu\{x: f(x)=1\}\cdot 1 = \mu\{f^{-1}(1)\}.$$

Our intention is to accept that the statement of the form $\displaystyle\int f\mathrm{d}x = \displaystyle\int g\mathrm{d}x$ is \textit{H\'{a}jek's satisfiable} (in a $\{0,1\}$-Dirac's case) in a given weak probabilistic\footnote{This type of models plays a role of the intended models in the whole reasoning -- even if it is not clearly mentioned.} model $\mathcal{M}$ by a valuation $v$ if the measure of a difference set $\{x: f(x)=1\} - \{y: g(y) =1\}$ is arbitrarily small and $\{y: g(y) =1\}\subseteq \{x: f(x)=1\}$\footnote{In order to elucidate a correlation of the integral symbol with a measure, on terms of which it is defined, we take the liberty to return to use this integral symbol instead of $\displaystyle\int f\mathrm{d}x$.}. In fact, these two sets corresponding to their integrable functions $f$ and $g$ can possibly differ in a very restricted, 'ommittable' number of elements. By the last assumption, the following should hold:
\begin{eqnarray}
   \forall \epsilon > 0\Big( \mu\Big(\{x:f(x)=1\} - \{y: g(y)=1\}\Big)< \epsilon\Big)\\\iff \mathcal{M}\models^{H}\,"\int f\mathrm{d}\mu = \int g\mathrm{d}\mu". 
\end{eqnarray}
Since $\{x\in \vert M\vert:f(x)=1\} = \Vert \phi\Vert_{M}$ and $\{x\in \vert M\vert:g(x)=1\} = \Vert \psi\Vert_{M}$, for some $\phi$ and $\psi$ corresponding to $f(x)$ and $g(x)$ (resp.), we can reformulate lines (24)-(25) to the following ones:
\begin{eqnarray}
  \forall \epsilon> 0\Big(\mu\Big(\Vert \phi\Vert_{M} - \Vert \psi\Vert_{M}\Big)< \epsilon\Big)\\\iff \mathcal{M}\models^{H}\," \displaystyle\int\phi\mathrm{d}\mu = \displaystyle\int\psi\mathrm{d}\mu", 
\end{eqnarray}
where $\Vert \phi\Vert$ is an interpretation of $\phi$ in a given model $\mathcal{M}$. \\

 \textbf{\textit{II. H\'{a}jek's satisfiability for '=' in non-Dirac's case}.} In this subsection, H\'{a}jek's satisfiability for '=' in a more general, non-Dirac's sense will be introduced. In other words, we will refer to $\mathcal{L}(HLI)$-formulae $\phi$ with their truth values $\Vert \phi \Vert = \{x: f_{\phi}(x)\in A\}$ such that $A$ may belong to a broad class of subsets of $[0,1]$, not only -- as previously -- being a singleton. Namely, we will take into account the sets of the form $\{x\in\vert M\vert:f_{\phi}(x)> \alpha\}$ and their measures $\mu\{x\in\vert M\vert:f_{\phi}(x)> \alpha\}$\footnote{Let us note that it opens a realistic possibility to consider not only their set-theoretic complements in the form of the sets of the type  $\mu\{x\in\vert M\vert:f_{\phi}(x)\leq \alpha\}$ but also -- denumerable sums of the pairwise disjoint sets of this type.}. 
 
 For simplicity of further considerations, we will adopt the following modified version of Definition 4 of Lebesgue integral for (normalized) simple function -- based on the sets of the above type: 
 \begin{equation}
     \displaystyle\oint f\mathrm{d}\mu = \sum_{\rho_{\alpha}\in(0,1)} \rho_{\alpha}\cdot\mu\{x\in\vert M\vert: f_{\phi}(x)> \alpha\},
 \end{equation}
 where $\rho_{\alpha}\in(0,1)$ is a normalized coefficient defining $f$ as a simple function and $\{x\in\vert M\vert: f_{\phi}(x)> \alpha\}$ is a measurable set, $\alpha\in(0,1)$\footnote{We do not specify how $\rho_{\alpha}$ depends on $\alpha$. One of the reasonable way may be to define $\rho_{\alpha} = \inf_{x\in\Vert M\vert}\{f_{\phi}(x) > \alpha\}$. However, it does not concern us in this context.}. 
 
As previously, we are willing to require an arbitrarily small (smaller than arbitrary positive $\epsilon$) sum of measures of the difference set $A = \{x\in\vert M\vert: f_{\phi}(x)> \alpha_{i}\} - \{x\in\vert M\vert: g_{\psi}(x)> \alpha_{j}\}$, for some functions $f_{\phi}(x)$, $g_{\psi}(x)$\footnote{These functions are associated by $\Vert\cdots\Vert$-function to $\phi$ and $\psi$ respectively.} and fixed $\alpha_{i}, \alpha_{j}\in(0,1)$, for $i,j\in I$\footnote{$I$ forms at the most denumerable set of indices.}, and a (weak probabilistic) model $\mathcal{M}$. Finally, we also expect that the second set determined by $\alpha_{j}$ is contained in the first set determined by $\alpha_{i}$. It leads to the following definition of H\'{a}jek's satisfiability of the integral symbols for non-Dirac's case. 
\begin{ardef}(\textbf{H\'{a}jek's satisfiability for integral $\mathcal{L}(HLI)$-expressions with '='}.) Let us assume that $\mathcal{M}$ be a weak probabilistic model with a model universe $\vert M\vert$, $\alpha_{i}, \alpha_{j}\in(0,1)$, for each $i,j\in I$, where I is (at most) denumerable set of indices. Let also establish two $\alpha_{i}$ and $\alpha_{j}$ such that $\{x\in\vert M\vert: g_{\psi}(x)> \alpha_{j}\}\subseteq\{x\in\vert M\vert: f_{\phi}(x)> \alpha_{i}\}$ and let $f, g$ be non-simple Lebesgue integrable functions corresponding to the sets. We say that an $\mathcal{L}(HLI)$-formula "$\displaystyle\int f\mathrm{d}\mu = \displaystyle\int g\mathrm{d}\mu$" \, is\, \texttt{H\'{a}jek's satisfiable in $\mathcal{M}$}, symb. $\mathcal{M}\models^{H} "\displaystyle\int f\mathrm{d}\mu = \displaystyle\int g\mathrm{d}\mu$",   if and only if the following condition holds:
\begin{eqnarray*}
  \forall_{\epsilon > 0}\Big(\sum_{i,j\in I}\mu\Big(\{x\in\vert M\vert: f_{\phi(x)} > \alpha_{i}\} \\ - \{x\in\vert M\vert: g_{\psi(x)} > \alpha_{j}\}\Big)< \epsilon\Big). 
\end{eqnarray*}
   \end{ardef}
We can venture to generalize the condition for all $\mathcal{L}(HLI)$-quantifier expressions with '='. 
\begin{ardef}\textbf{H\'{a}jek's satisfiability for $\mathcal{L}(HJL)$-quantifiers} \\\textbf{in non-Dirac case}).
Let us assume that $\mathcal{M}$ be a weak probabilistic model with a model universe $\vert M\vert$, $\alpha_{i}, \alpha_{j}\in(0,1)$, for each $i,j\in I$, where I is (at most) denumerable set of indices. Let also establish two $\alpha_{i}$ and $\alpha_{j}$ such that $\{x\in\vert M\vert: g_{\psi}(x)> \alpha_{j}\}\subseteq\{x\in\vert M\vert: f_{\phi}(x)> \alpha_{i}\}$ and let $f, g$ be non-simple Lebesgue integrable functions corresponding to the sets. Let assume finally that $Q\phi(x), Q\psi(x)$ are some quantifier $\mathcal{L}$(HLI)-expressions, i.e. $Q\in\{\forall_{x}, \exists_{x}, \displaystyle\int()\mathrm{d}x\}$.

We say that an $\mathcal{L}(HLI)$-quantifier formula "$Q\phi(x) = Q\psi(x)$" \, is\, \texttt{H\'{a}jek's satisfiable in $\mathcal{M}$}, symb. $\mathcal{M}\models^{H} "Q\phi(x) = Q\psi(x)"$, if and only if the following condition holds:
\begin{eqnarray*}
  \forall_{\epsilon > 0}\Big(\sum_{i,j\in I}\mu\Big(\{x\in\vert M\vert: f_{\phi}(x) > \alpha_{i}\} \\- \{x\in\vert M\vert: g_{\psi}(x) > \alpha_{j}\}\Big)< \epsilon\Big).  
\end{eqnarray*}
(In terms of Definition 20, $e(Q\phi(x) = Q\psi(y))= 1$.)
\end{ardef}
\begin{exo} We show that the thesis of Fubini theorem  $(\mu 5)$. Therefore, let us assume that $\mathcal{M}$ is a weak probabilistic model. If $f: \vert \mathcal{M}\vert\times\vert \mathcal{M}\vert\to [0,1]$, then -- due to a simplified version of Definition 16 (see: the paragraph before it) -- we get 
\begin{equation*}
    \displaystyle\oint(\displaystyle\oint f(x,y)\mathrm{d}x)\mathrm{d}y = \sum_{a\in\vert\mathcal{M}\vert} (\sum_{b\in\vert\mathcal{M}\vert} f(a,b)\mu(b))\mu(a) =
\end{equation*}    
  \begin{equation*}
    \sum_{a,b\in\vert\mathcal{M}\vert}f(a,b)\mu(a)\mu(b) =\sum_{b\in\vert\mathcal{M}\vert} (\sum_{a\in\vert\mathcal{M}\vert} f(a,b)\mu(a))\mu(b) =\\
    \end{equation*}
    \begin{equation*}
       = \displaystyle\oint\displaystyle\oint f(x,y)\mathrm{d}y\mathrm{d}x.
    \end{equation*}
Assuming that the measure $\mu^{*}\{(x,y)\in\vert\mathcal{M}\vert^{2}: f(x,y) = 1\}$ = $\Vert f\Vert_{\mathcal{M}}$ is defined\footnote{It should be a product measure built up from measures concerning each variable $x$ and $y$.}, we can immediately infer from (26-27) that 
$\mathcal{M}\models^{H}  \displaystyle\oint(\displaystyle\oint f(x,y)\mathrm{d}x)\mathrm{d}y=\displaystyle\oint(\displaystyle\oint f(x,y)\mathrm{d}y)\mathrm{d}x$.

\end{exo}
Being equipped with all the required definitions, we can define HLI as an abstract logic. 
\begin{ardef} (\textbf{$HLI$ as an abstract logic}). 
HLI is a triple 
\begin{equation*}
    (\mathcal{K},\, Sent_{\mathcal{L}(HLI)}, \models^{H}),
\end{equation*}
 where 
 \begin{enumerate}
     \item $\mathcal{K}$ is a class of weak probabilistic models closed under isomorphism, renaming and reducts,
     \item $Sent_{\mathcal{L}}$ is a set of $\mathcal{S}$-sentences of $\mathcal{L}(HLI)$,
     \item $\models^{H}$ is H\'{a}jek's satisfaction relation given by Definition 20 and Definition 22.
 \end{enumerate}  
\end{ardef}

\section{The Lindstr\o m-type Theorem for Extensions of HLI and an Extended Outline of its Proof}
Having defined $HLI$ as an abstract logic in the model-theoretic treatment, we can venture to formulate and prove the Lindstr\o m's-type theorem for all extensions of $HLI$. We begin with introducing a terminological tissue of the proof argumentation leading to the main result. They cannot be presented in Section II as they are involved in a conceptual tissue elaborated in Section III.   

\subsection{A terminological tissue of Lindstr\o m's theorem and its proof.}

In order to perform the task, we need to consider both $HLI$ (as an abstract logic) and its extensions as equipped by unique binary relations to be called a \textit{H\'{a}jek approximation system} and denoted by $\lhd^{H}$. For each pair of formulae, say $\phi(x)$ and $\psi(x)$\footnote{We are especially interested in the pairs of the formulae as in the previous definitions of H\'{a}jek satisfiability.}  of a given abstract logic language $\mathcal{L}[L]$ -- this unique relation will intentionally encode some of the metalogical properties of the formulae. 

Some of the properties, such as Definition 23 of an abstract logic language $\mathcal{L}[L]$  or H\'{a}jek's satisfiability in their corresponding $\mathcal{L}$-structures in the sense of Definition 21 and Definition 22 have been presupposed or \textit{implicitly} assumed. 
It was previously made in some informal way. 
In this section, they will be explicitly indicated and formally expressed in terms of the following definition of $\lhd^{H}$-approximation system.

\begin{ardef}(\textbf{H\'{a}jek's} $\lhd^{H}$-\textbf{approximation system}).
Let $\mathcal{L}$ be an abstract logic. The binary relation $\lhd^{H}$ is said to be \texttt{H\'{a}jek's} $\lhd^{H}$-\texttt{approximation system} if and only if the following conditions hold:
\begin{enumerate}
    \item $\lhd^{H}$ is transitive,
    \item If $\phi\lhd^{H}\phi^{'}$ and $\phi\in \mathcal{L}[L]$, then $\phi^{'}\in\mathcal{L}[L]$.
    \item  If $\phi\lhd^{H} \phi^{'}$ and $\mathcal{M}\models ^{H}\phi$, then $\mathcal{M}\models^{H} \phi^{'}$.
\end{enumerate}
\end{ardef}
The following definition explains how to relate H\'{a}jek's satisfiability to the H\'{a}jek's $\lhd^{H}$-approximation, just introduced. These two notions meet together in the concept of \textit{approximate H\'{a}jek's satisfiability}.

\begin{ardef}
We say that $\mathcal{M}$ \texttt{approximately H\'{a}jek's satisfies} a formula $\phi\in\mathcal{L}[L]$, symb. $\mathcal{M}\models^{HA}\phi$, if for all $\lhd^{H}$-approximations $\phi^{'}$ of $\phi$, it holds $\mathcal{M}\models^{H} \phi^{'}$.
\end{ardef}
\begin{remark} Let us note that $\models^{HA}$ is weaker than $\models^{H}$, which is weaker than $\models$-satisfaction. 
\end{remark}
Because of the new conceptual scenario determined by H\'{a}jek's approximation satisfiability, we should redefine both Compactness Property and the Elementary Chain Condition for abstract logic with H\'{a}jek's approximation.  
We will say a theory (as a set of sentences) $\sigma$ of $\mathcal{L}$ is consistent if there
exists a structure $\mathcal{M}$ of $\mathcal{L}$ which approximately satisfies every sentence in $\Sigma$. In a similar way, we
will say that $\sigma$ is finitely consistent if every finite subset of $\sigma$ is consistent.

Let $(\mathcal{L}, \lhd^{H})$ be a logic with H\'{a}jek's approximations. For a structure with semantic integrals $\mathcal{M}$, let $\mathrm{Th}(\mathcal{L})^{HA}
(\mathcal{M})$ denote the theory (set of sentences) of $\mathcal{L}$ which are H\'{a}jek's approximately satisfied by $\mathcal{M}$. 
\begin{ardef}(\textbf{H\'{a}jek's Elementary substructure}.)
Let $\mathcal{M,N}$ be structures of $\mathcal{L}$\footnote{In all these contexts, we consider $\mathcal{L}$ by default as the pair $(\mathcal{L}, \lhd^{H})$.}. The structure $\mathcal{M}$ is said to be \texttt{H\'{a}jek's elementary substructure} of $\mathcal{N}$ and will be denoted by 
\begin{equation}
    \mathcal{M}\prec^{HA}_{\mathcal{L}} \mathcal{N}
\end{equation}
if and only if
$M\subseteq N$ and the structure $(\mathcal{N}, a)_{a\in\mathcal{M}}$ H\'{a}jek's approximately satisfies $\mathrm{Th}(\mathcal{L})^{HA}
((M, a)_{a\in \mathcal{M}})$\footnote{This defining is justified by the fact that the class of structures of a logic is assumed to be closed under expansions by constants. Recall that $\prec^{HA}$ is a relation of being elementary substructure (in terms of $\models^{HA}$ satisfiability).}.
\end{ardef}

\begin{property} (\textbf{Compactness}). Let $(\mathcal{L}, \lhd^{H})$ be an abstract logic with H\'{a}jek's approximation. $(\mathcal{L}, \lhd^{H})$ is said to be satisfied the \texttt{compactness theorem} if
it has the property that every theory of $\mathcal{L}$ which is finitely consistent is consistent.
\end{property}

\begin{property} (\textbf{The Elementary Chain Condition}.)
Let $(\mathcal{L}, \lhd^{H})$ be a logic with H\'{a}jek's approximations.  $(\mathcal{L}, \lhd^{H})$ satisfies the
\texttt{elementary chain condition} if the following holds. Whenever
\begin{equation*}
    \mathcal{M}_{0}\prec_{\mathcal{L}}^{HA} \mathcal{M}_{1}\prec_{\mathcal{L}}^{HA}\mathcal{M}_{2}\prec_{\mathcal{L}}^{HA}\ldots, (n< \omega)
\end{equation*}
there exists such a structure $\mathcal{M}$ of $\mathcal{L}$ -- uniquely determined by $\bigcup \mathcal{M}_{n}$ -- that $\mathcal{M}_{n}\prec_{\mathcal{L}}^{HA} \mathcal{M}$, for each $n< \omega$.
\end{property}
\begin{property} (\textbf{Weak Negation Property}).
Let $(\mathcal{L}, \lhd)$ be a logic with approximations. We say that $(\mathcal{L}, \lhd)$ has a weak negation property if and only if 
there exists a monadic operation $\neg^{weak}$ on $\mathcal{L}$-sentences such that
\begin{enumerate}
    \item  If $\phi \in \mathcal{L}[L]$, then 
$\neg^{weak}\phi \in \mathcal{L}[L]$;
    \item If $\phi \in \mathcal{L}[L]$ and $\mathcal{M}$ forms an $L$-structure of $\mathcal{L}$, then
    \begin{itemize}
        \item $\mathcal{M}\models^{H} \phi$ or $\mathcal{M}\models^{H}\neg^{weak}\phi$,
        \item for each $\lhd$-approximation $\phi^{'}$ of $\phi$ it holds\\
        $\mathcal{M}\models^{HA}\neg^{weak}\phi^{'}$ implies $\mathcal{M}\not\models^{HA}\phi$. 
    \end{itemize}
\end{enumerate}
\begin{exo}
Weak negation forms a standard negation for each logic systems -- considered as a logic with approximation $\lhd$, where $\lhd$ is a diagonal relation on sentences (i.e. each sentence is the only approximation of itself).
\end{exo}
\end{property}
\begin{ardef} (\textbf{Reducibility of an abstract logic $\mathcal{L}$ to $HLI$}.\footnote{The similar definition of reducibility may be introduced for any pair of abstract logics. Note that we do not require H\'{a}jek's satisfiability in $\mathcal{L}$.}) Let $(\mathcal{L}, \lhd^{H})$ be a logic with $\lhd$-approximation, and $(HLI, \lhd^{H})$ and $(\mathcal{L}, \lhd^{H})$  have the same structures. We say
that a sentence $\phi\in\mathcal{L}[L]$ is \textit{reducible} to $HLI$ if the following condition holds. For every  $\lhd$-approximation $\phi^{'}$ of $\phi$ there exist two sentences $\psi[\phi, \phi^{'}]$, $\psi^{'}[\phi, \phi^{'}]$ of $HLI$, such that
\begin{enumerate}
    \item $\psi[\rho, \rho^{'}]\lhd^{H}\psi^{'}[\phi, \phi^{'}]$ and
    \item If $\mathcal{M}$ is a structure for $\mathcal{L})$(and $HLI$), than
    \begin{itemize}
        \item $\mathcal{M}\models_{\mathcal{L}}^{H}\phi$ implies $\mathcal{M}\models^{H}\psi[\phi, \phi^{'}]$,
        \item $\mathcal{M}\models^{H}\psi^{'}[\phi, \phi^{'}]$ implies $\mathcal{M}\models^{H}_{\mathcal{L}}\phi^{'}$.
    \end{itemize}
\end{enumerate}
We say that $(\mathcal{L}, \lhd)$ forms an \textit{extension} of $(HLI, \lhd^{H})$ if every sentence of $\mathcal{L}$ is reducible to $HLI$. 
\end{ardef}
\subsection{The Proof of Lindstr\o m's type Theorem for HLI -- the Proof Idea and its Extended Outline}

\noindent\textbf{\textit{I. The proof idea}}. The proof of  Lindstr\o m's Theorem relies on showing that each abstract logic $(\mathcal{L}, \lhd)$ as an extension of $(\mathcal{L}, \lhd)$ is equivalent to the abstract logic $(HLI, \lhd^{H})$ (i.e., they have the same expressive power) if it satisfies both compactness theorem, the so-called elementary chain condition, and it is closed on negation. The proof itself is carried our by \textit{reductio ad absurdum} by showing that violating one of the conditions\footnote{It is assumed that $(\mathcal{L}, \lhd)$ is not reducible to $(HLI, \lhd^{H})$ in our proof.} generates a contradiction.

The argumentation line of the proof is based on two important but rather technical lemmas. Lemma 2 allows us to specify the initial situation for the construction of the required contradiction. More precisely, it ensures the existence of the following two $\mathcal{L}$-structures, $\mathcal{M}_{0}$ and $\mathcal{M}_{1}$ such that there exists $\theta\in\mathcal{L}$, which is satisfied (in H\'{a}jek's sense) in one of them but does not in the second one (more precisely: a weak negation of its approximation $\theta^{'}$ is satisfied here). This scenario will be disconfirmed by means of Lemma 2. This lemma -- together with the elementary chain condition -- warranties not only a linearly ordered sequence of structures $\mathcal{M}_{0}\prec \mathcal{M}_{1}\prec\ldots$, but also a structure $\mathcal{M}$ as the final element of that sequence and such that we get  $\mathcal{M}\models^{HA}\theta$ and $\mathcal{M}\models^{HA}\neg^{weak}\theta^{'}$, for the same $\theta\in\mathcal{L}$ and its $\lhd^{H}$-approximation $\theta^{'}$ as previously. The proof of Lemma 1 exploits Observation 1, which establishes some equivalence between H\'{a}jek's approximate satisfiability of a given theory $\Sigma\in HLI$ and H\'{a}jek's approximate satisfiability of a corresponding theory $\Sigma^{L}$ -- built up in a unique way from sentences of $\mathcal{L}$ dependent on elements of $\Sigma$. \\

\noindent\textbf{\textit{II. An extended outline of the proof}}. 
It is convenient to present the proof line of Lindstr\o m's Theorem beginning with formulating Observation 1. For that reason, for a theory $\Sigma\in (HLI, \lhd^{H})$, let 
\begin{equation*}
    \Sigma^{\mathcal{L}} = \{\psi[\rho^{'}, \rho^{''}] : \rho^{'}
\lhd^{H} \rho^{"}\,\,\mathrm{and}\,\, \rho \lhd^{H} \rho^{'},\,\, \mathrm{for\,\, some}\,\, \rho \in \Sigma\}.
\end{equation*}

\begin{obs} Let $\Sigma$ be a theory in $HLI$ and $\Sigma^{\mathcal{L}}$ defined as previously. Than it holds the following:
\begin{equation}
    \mathcal{M}\models^{HA}\Sigma\iff \mathcal{M}\models^{HA}_{\mathcal{L}}\Sigma^{\mathcal{L}}.
\end{equation}
\end{obs}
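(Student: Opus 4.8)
The plan is to prove the two implications separately; each one is a short chase through the three clauses of the definition of a H\'ajek $\lhd^{H}$-approximation system (Definition 24), the definition of approximate H\'ajek's satisfiability (Definition 25), and the two clauses of reducibility of $\mathcal{L}$ to $HLI$ (Definition 29). Throughout I will use that $(\mathcal{L},\lhd^{H})$ is an extension of $(HLI,\lhd^{H})$, so that every $\mathcal{L}$-sentence occurring below is reducible to $HLI$ and the sentences $\psi[\cdot,\cdot],\psi^{'}[\cdot,\cdot]$ exist; I fix once and for all a choice of these witnesses, and I also use that ``$\mathcal{M}$ approximately H\'ajek's-satisfies a theory'' means it does so for each sentence of the theory.

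For the forward direction, assume $\mathcal{M}\models^{HA}\Sigma$ and fix an arbitrary member $\psi[\rho^{'},\rho^{''}]$ of $\Sigma^{\mathcal{L}}$, witnessed by some $\rho\in\Sigma$ with $\rho\lhd^{H}\rho^{'}\lhd^{H}\rho^{''}$. Since $\rho^{'}$ is a $\lhd^{H}$-approximation of $\rho$ and $\mathcal{M}\models^{HA}\rho$, Definition 25 yields $\mathcal{M}\models^{H}\rho^{'}$; the first bullet of clause (2) of reducibility, applied to the pair $(\rho^{'},\rho^{''})$, then gives $\mathcal{M}\models^{H}\psi[\rho^{'},\rho^{''}]$. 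To promote this to $\mathcal{M}\models^{HA}\psi[\rho^{'},\rho^{''}]$, take an arbitrary $\lhd^{H}$-approximation $\chi$ of $\psi[\rho^{'},\rho^{''}]$ and invoke clause (3) of Definition 24 to get $\mathcal{M}\models^{H}\chi$. As $\psi[\rho^{'},\rho^{''}]$ ranges over all of $\Sigma^{\mathcal{L}}$, this gives $\mathcal{M}\models^{HA}_{\mathcal{L}}\Sigma^{\mathcal{L}}$.

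For the converse, assume $\mathcal{M}\models^{HA}_{\mathcal{L}}\Sigma^{\mathcal{L}}$, fix $\rho\in\Sigma$ and an arbitrary $\lhd^{H}$-approximation $\sigma$ of $\rho$; the goal is $\mathcal{M}\models^{H}\sigma$. The idea is to realize $\sigma$ as the second coordinate of a pair indexing $\Sigma^{\mathcal{L}}$: using reflexivity of $\lhd^{H}$ we may write $\rho\lhd^{H}\rho\lhd^{H}\sigma$, so $\psi[\rho,\sigma]\in\Sigma^{\mathcal{L}}$ and hence $\mathcal{M}\models^{HA}\psi[\rho,\sigma]$ by hypothesis. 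By clause (1) of reducibility, $\psi^{'}[\rho,\sigma]$ is a $\lhd^{H}$-approximation of $\psi[\rho,\sigma]$, so $\mathcal{M}\models^{H}\psi^{'}[\rho,\sigma]$, and then the second bullet of clause (2) of reducibility delivers $\mathcal{M}\models^{H}\sigma$. Since $\sigma$ was an arbitrary approximation of $\rho$ and $\rho$ an arbitrary element of $\Sigma$, we conclude $\mathcal{M}\models^{HA}\Sigma$.

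The delicate point is exactly the factoring used in the converse: to insert $\sigma$ into a chain $\rho\lhd^{H}\rho^{'}\lhd^{H}\sigma$ with $\rho\in\Sigma$ I used that $\lhd^{H}$ is reflexive, which is natural for an approximation preorder and holds in the motivating diagonal example, but is not literally among the axioms of Definition 24 (only transitivity is). If one does not wish to assume reflexivity, the converse instead requires the mild density condition that for every $\sigma$ with $\rho\lhd^{H}\sigma$ there is $\tau$ with $\rho\lhd^{H}\tau\lhd^{H}\sigma$; I will flag this and adopt reflexivity of $\lhd^{H}$ as a standing hypothesis on approximation systems. Everything else is bookkeeping with the decorations $\models^{H}$, $\models^{HA}$, $\models^{HA}_{\mathcal{L}}$.
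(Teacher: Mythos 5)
Your forward direction follows essentially the same route as the paper's own proof, but you execute it more carefully: the paper passes from $\mathcal{M}\models^{HA}\rho$ to $\mathcal{M}\models^{H}\psi[\rho,\rho^{'}]$ and then postulates a link $\psi[\rho,\rho^{'}]\lhd^{H}\psi[\rho^{'},\rho^{''}]$ to reach the actual members of $\Sigma^{\mathcal{L}}$, whereas you go directly from $\mathcal{M}\models^{H}\rho^{'}$ (which Definition 25 gives you, since $\rho^{'}$ is an approximation of $\rho$) to $\mathcal{M}\models^{H}\psi[\rho^{'},\rho^{''}]$ via the first bullet of reducibility applied to the pair $(\rho^{'},\rho^{''})$. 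That matches the indexing in the definition of $\Sigma^{\mathcal{L}}$ better than the paper's chain does, and your promotion to $\models^{HA}$ via clause (3) of Definition 24 is the correct reading of the paper's rather garbled final step.

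The substantive difference is that the paper proves only one implication (it says so explicitly), while you also prove the converse. Your converse argument is sound given your standing hypothesis, and the point you flag is genuine: Definition 24 postulates only transitivity of $\lhd^{H}$, so to realize an arbitrary approximation $\sigma$ of $\rho\in\Sigma$ as the second coordinate of some $\psi[\rho^{'},\rho^{''}]\in\Sigma^{\mathcal{L}}$ you really do need either reflexivity or the density condition you state; without one of these the converse does not follow from the stated definitions. The paper never confronts this because it never attempts that direction, even though the converse is the implication actually used later (Lemma 1 reduces consistency of a union of theories to consistency of the corresponding $\Sigma^{\mathcal{L}}$-translations, which requires passing back from $\models^{HA}_{\mathcal{L}}\Sigma^{\mathcal{L}}$ to $\models^{HA}\Sigma$). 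So your proposal is not merely correct; it identifies and patches a gap in the statement-versus-proof mismatch of the original, at the modest price of one extra (and natural) axiom on $\lhd^{H}$.
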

\begin{proof}
We show this property for the implication in one side.  Let $\mathcal{M}\models^{HA}\rho$, for some $\rho\in\Sigma$. It exactly means that
\begin{equation}
    \forall\rho^{'}\in\Sigma (\rho\lhd^{H}\rho^{'}\Rightarrow \mathcal{M}\models^{H} \rho). 
\end{equation}
Therefore, let $\rho, \rho^{'}\in\Sigma$ be such that $\rho\lhd^{H}\rho^{'}$. Thus, we can infer that $\mathcal{M}\models^{H}\rho$ from definition of H\'{a}jek's $\lhd^{H}$-approximation. Since $\mathcal{M}  
\models^{H}\rho\Rightarrow \mathcal{M}\models^{H} \psi[\rho, \rho^{'}]$, for some sentence $\psi[\rho, \rho^{'}]$ dependent on $\rho, \rho^{'}\in\Sigma$ such that $\rho\lhd^{H} \rho^{'}$, we can deduce that 
$\mathcal{M}\models^{H}\psi[\rho, \rho^{'}]$. Let now $\psi[\rho^{'}, \rho^{"}]$ be such a sentence that $\psi[\rho, \rho^{'}]\lhd^{H}\psi[\rho^{'}, \rho^{"}]$. Since $\mathcal{M}\models^{H}\psi[\rho, \rho^{'}]$, then also $\mathcal{M}\models^{H}\psi[\rho^{'}, \rho^{"}]$, for $\rho, \rho^{'},\rho^{"}$ as in definition of $\Sigma^{L}$. 

In order to show that also $\mathcal{M}\models^{HA}\psi[\rho^{'}, \rho^{''}]$, one needs to show that for any sentence $\phi^{'}[\rho^{'}, \rho^{''}]\in \Sigma^{\mathcal{L}}$, it holds 
\begin{equation*}
 \psi[\rho^{'}, \rho^{''}] \lhd^{H} \phi^{'}[\rho^{'}, \rho^{''}].   
\end{equation*}
Meanwhile, this condition is satisfied from the definition of $\lhd^{H}$-approximation. Thus, $\mathcal{M}\models^{HA}\psi[\rho^{'}, \rho^{"}]$.
\end{proof} 

\begin{remark} An alternative proof of Observation 1 might be carried out 
inductively -- due to the taxonomy of $\mathcal{L}(HLI)$ formulae and conditions of H\'{a}jek's satisfaction for them. For that reason, $\lhd^{H}$ should be established among $\mathcal{L}(HLI)$ sentences, in particular -- among the sentences of the form $\psi[Q\phi(x)=Q\phi(y), Q\chi(x) = Q\chi(y)]$, where $Q$ is a quantifier sign admissible in $\mathcal{L}(HLI)$\footnote{This proof would be less general and more extended and less beneficial from the perspective of further analysis, so we omit its details.}. 
\end{remark}

\begin{lemma}  Let $\mathcal{M}$ and $\mathcal{N}$ be structures with integrals such that $\mathcal{M}\prec^{HA} \mathcal{N}$. Then there
exists a structure with integrals $\mathcal{K}$ such that
\begin{enumerate}
    \item $\mathcal{M}\prec^{HA}_{\mathcal{L}} \mathcal{K}$,\,\,and  $\mathcal{N}\prec^{HA}_{\mathcal{L}}\mathcal{K}$.
\end{enumerate}
\end{lemma}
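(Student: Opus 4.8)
The plan is to obtain $\mathcal{K}$ by a compactness argument applied to the union of the two H\'{a}jek-approximate elementary diagrams of $\mathcal{M}$ and $\mathcal{N}$, amalgamated over $M$. First I would fix the vocabulary: since $\mathcal{M}\prec^{HA}\mathcal{N}$ we have $M\subseteq N$, so I introduce a set of fresh constants $\{c_{a}:a\in M\}$ together with further fresh constants $\{c_{b}:b\in N\setminus M\}$, and work in the vocabulary $L^{+}=L\cup\{c_{a}:a\in M\}\cup\{c_{b}:b\in N\setminus M\}$. Let $\Delta_{\mathcal{M}}=\mathrm{Th}(\mathcal{L})^{HA}\big((M,a)_{a\in M}\big)$ and $\Delta_{\mathcal{N}}=\mathrm{Th}(\mathcal{L})^{HA}\big((N,b)_{b\in N}\big)$, the latter read with $c_{a}$ naming the element $a\in M\subseteq N$, and put $T=\Delta_{\mathcal{M}}\cup\Delta_{\mathcal{N}}$. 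By Definition 26, any structure of $\mathcal{L}$ which H\'{a}jek's-approximately satisfies $T$ restricts (after using closure of the structure class under isomorphism to turn the relevant embeddings into inclusions) to a structure $\mathcal{K}$ with $\mathcal{M}\prec^{HA}_{\mathcal{L}}\mathcal{K}$ and $\mathcal{N}\prec^{HA}_{\mathcal{L}}\mathcal{K}$; the two embeddings are automatically compatible on $M$ because both send $a$ to $c_{a}^{\mathcal{K}}$.

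Second, I would check that $T$ is finitely consistent in the sense of Property 1, so that compactness applies. Given a finite $T_{0}\subseteq T$, split it as $T_{0}^{\mathcal{M}}\cup T_{0}^{\mathcal{N}}$ and expand $\mathcal{N}$ to the $L^{+}$-structure $\mathcal{N}^{+}=(\mathcal{N},a,b)_{a\in M,\,b\in N\setminus M}$, using closure under expansion by constants. Every sentence of $T_{0}^{\mathcal{N}}$ lies in $\mathcal{N}$'s own approximate diagram, hence is H\'{a}jek's-approximately satisfied by $\mathcal{N}^{+}$. Every sentence of $T_{0}^{\mathcal{M}}$ has the form $\phi(c_{a_{1}},\dots,c_{a_{n}})$ with $\mathcal{M}\models^{HA}\phi(a_{1},\dots,a_{n})$; here I invoke the hypothesis $\mathcal{M}\prec^{HA}\mathcal{N}$ — and, if $\prec^{HA}$ is understood over $HLI$ rather than over $\mathcal{L}$, the reducibility of each $\mathcal{L}$-sentence to $HLI$ (Definition 27) together with transitivity and upward closure of $\lhd^{H}$ (Definition 24), which let one pass from $\mathcal{M}\models^{H}\psi[\phi,\phi']$ to $\mathcal{N}\models^{H}\psi'[\phi,\phi']$ and hence to $\mathcal{N}\models^{H}_{\mathcal{L}}\phi'$ for every $\lhd^{H}$-approximation $\phi'$ of $\phi$ — to conclude $\mathcal{N}\models^{HA}\phi(a_{1},\dots,a_{n})$, i.e. $\mathcal{N}^{+}\models^{HA}\phi(c_{a_{1}},\dots,c_{a_{n}})$. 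Thus $\mathcal{N}^{+}$ H\'{a}jek's-approximately satisfies all of $T_{0}$, so $T_{0}$ is consistent; as $T_{0}$ was arbitrary, $T$ is finitely consistent.

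Third, by the Compactness Property (Property 1), $T$ is consistent, so there is a structure $\mathcal{K}^{+}$ of $(\mathcal{L},\lhd^{H})$ with $\mathcal{K}^{+}\models^{HA}T$; its reduct $\mathcal{K}=\mathcal{K}^{+}\restriction L$ is again a weak probabilistic model (structure with integrals) by closure under reducts, and reading off $a\mapsto c_{a}^{\mathcal{K}^{+}}$ and $b\mapsto c_{b}^{\mathcal{K}^{+}}$ produces the two required H\'{a}jek's elementary embeddings, as explained in the first paragraph. I expect the main obstacle to be precisely the transfer step inside the finite-consistency verification: turning ``$\mathcal{M}$ H\'{a}jek's-approximately satisfies $\phi(\bar a)$'' into ``$\mathcal{N}$ does the same'', which is transparent if $\prec^{HA}$ already denotes $\mathcal{L}$-elementarity but, in the $HLI$-elementarity reading, forces the argument to be routed through reducibility of $\mathcal{L}$ to $HLI$ and the closure properties of the H\'{a}jek approximation system $\lhd^{H}$. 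A secondary technical point is the bookkeeping that makes the two embeddings genuine inclusions agreeing on $M$, which is handled by the shared constants $c_{a}$ and closure of the structure class under isomorphism.
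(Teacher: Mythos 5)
Your overall skeleton --- form the union of the two constant-expanded approximate diagrams, check finite consistency, invoke the Compactness Property, and read the amalgam off a model of the union --- is the same as the paper's. The difference is the \emph{direction} of the finite-consistency check. The paper takes a finite fragment $\Sigma^{\psi}=\{\psi[\rho_{i}^{'},\rho_{i}^{''}]\}_{i\leq n}$ of the translated $\mathcal{N}$-diagram $\mathrm{Th}((\mathcal{N},a,b)_{a\in\mathcal{M},b\in\mathcal{N/M}})^{\mathcal{L}}$ and realizes it inside an expansion of $\mathcal{M}$: using $\mathcal{M}\prec^{HA}\mathcal{N}$ it pulls the finitely many parameters $b_{1},\ldots,b_{l}\in N\setminus M$ down to witnesses $c_{1},\ldots,c_{l}\in\vert\mathcal{M}\vert$ for the approximations $\rho_{i}^{'}$, thereby obtaining genuine $\models^{H}$-satisfaction of the $\rho_{i}^{'}$ in $\mathcal{M}$, and only then applies the first reducibility clause to land on the $\psi_{i}[\rho^{'},\rho^{''}]$. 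You go the other way: you keep $\mathcal{N}$ fixed and push the $\mathcal{M}$-diagram up. Note also that the paper first replaces the two $HLI$-diagrams by their $\mathcal{L}$-translations via Observation 1; that is where the mismatch between the $HLI$-elementarity in the hypothesis and the $\mathcal{L}$-elementarity in the conclusion is absorbed, and your reducibility detour is meant to play the same role.

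Two concrete problems with your direction. First, the transfer step has a gap at exactly the point you flag: reducibility (Definition 27) fires from $\mathcal{M}\models^{H}_{\mathcal{L}}\phi$, whereas a sentence of $\mathrm{Th}(\mathcal{L})^{HA}((M,a)_{a\in M})$ is only guaranteed to be \emph{approximately} satisfied, and by Remark 2 the relation $\models^{HA}$ is strictly weaker than $\models^{H}$ (the approximation system $\lhd^{H}$ is not assumed reflexive). So your chain cannot start at ``$\mathcal{M}\models^{H}\psi[\phi,\phi^{'}]$''; the paper's downward route manufactures the needed $\models^{H}$-facts explicitly in the step where $(\mathcal{M},a_{1},\ldots,a_{k})$ is enlarged by $c_{1},\ldots,c_{l}\in\vert\mathcal{M}\vert$ so that the approximations $\rho_{i}^{'}$ hold outright. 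Second, if your transfer did go through for every sentence of $\Delta_{\mathcal{M}}$, then $\mathcal{N}^{+}$ would satisfy all of $T$ already, compactness would be unnecessary, and $\mathcal{K}=\mathcal{N}$ would witness the lemma on its own --- a sign that this reading trivializes the statement rather than proving its intended content, namely the construction of a proper amalgam over $\mathcal{M}$ of the kind that is iterated in Lemma 2 and in the proof of the main theorem.
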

\begin{proof} In order to justify an existence of such a $\mathcal{K}$ structure, it is convenient to think about it as a 'cumulative' structure, as consisting of two sets of elements from $\vert \mathcal{M}\vert$ and $\vert\mathcal{N/M}\vert$. From a syntactic point of view, it is enough to show that the theory $\mathrm{Th}((\mathcal{M},a)_{a\in\mathcal{M}})\cup \mathrm{Th}(\mathcal{N},a,b)_{a\in\mathcal{M}, b\in\mathcal{N/M}})$ is consistent. Due to the Lemma 1 -- it enough to show that the theory 
$\mathrm{Th}((\mathcal{M},a)_{a\in\mathcal{M}})^{\mathcal{L}}\cup \mathrm{Th}((\mathcal{N},a,b)_{a\in\mathcal{M}, b\in\mathcal{N/M}})^{\mathcal{L}}$ is consistent. 
Since $\mathcal{L}$ has compactness property, it is enough to show that each finite subset of this theory is consistent in order to show that the whole theory is consistent.

For that reason, let us fix a finite set (of sentences) of
$\mathrm{Th}((\mathcal{N},a,b)_{a\in\mathcal{M}, b\in\mathcal{N/M}})^{\mathcal{L}}$ as the set 
\begin{equation*}
   \Sigma ^{\psi}= \{\psi[\rho_{1}^{'},\rho_{1}^{"}], \psi[\rho_{2}^{'},\rho_{2}^{"}],\ldots, \psi[\rho_{n}^{'}, \rho_{n}^{"}]\},
\end{equation*}
for such $\rho_{1}, \rho_{2},\ldots, \rho_{n}\in\mathrm{Th}((\mathcal{M},a)_{a\in\mathcal{M}})$ that $\rho_{i} < \rho_{i}^{'}< \rho_{i}^{"}$, for $i= 1,2\ldots, n$. 
We will find the finite structure extension on a base of $\mathcal{M}$ and $\mathcal{N/M}$, in which the set $\{\psi[\rho_{1}^{'},\rho_{1}^{"}], \psi[\rho_{2}^{'},\rho_{2}^{"}],\ldots, \psi[\rho_{n}^{'}, \rho_{n}^{"}]\}$ is satisfied (in the sense of $\models^{H}_{\mathcal{L}}$-satisfaction). It will exactly mean that this set in consistent.

Therefore, let $\hat{a_{1}},\hat{a_{2}},\ldots, \hat{a_{k}}$ and 
$\hat{b_{1}},\hat{b_{2}},\ldots, \hat{b_{l}}$ be (an exhaustive) list of names of elements from $\vert\mathcal{M}\vert$ and $\vert\mathcal{N/M}\vert$ (resp.) occurred in $\rho$'s. Let us observe that \begin{equation*}
    ((\mathcal{N}, a, b)_{a\in\mathcal{M}, b\in\mathcal{N/M}}) \models^{HA} \forall_i^{n}\, \rho_{i}(\hat{a_{1}},\hat{a_{2}},\ldots, \hat{a_{k}},\hat{b_{1}},\hat{b_{2}},\ldots, \hat{b_{l}}).
\end{equation*}
 Since $\rho_{i}^{'}$ is H\'{a}jek's $\lhd^{H}$-approximation of $\rho_{i}$, for each $i = 1,\ldots, n$ we can enlarge $(\mathcal{M}, a_{1},\ldots, a_{k})$ by such new elements $c_{1}, c_{2}\ldots c_{l}\in\vert\mathcal{M}\vert$ to obtain
\begin{equation}
 (\mathcal{M}, a_{1},\ldots, a_{k}, c_{1}, c_{2}\ldots c_{l}) \models^{H}\forall_{i}^{n}\rho^{'}_{i} (\hat{a_{1}},\hat{a_{2}},\ldots, \hat{a_{k}},\hat{c_{1}},\hat{c_{2}},\ldots, \hat{c_{l}}). 
\end{equation}
Since the paraphrazed Condition 2 of Property 3 (of weak negation). asserts that $\mathcal{M}\models \rho^{'}\Rightarrow \mathcal{M}\models \psi[\rho^{'}, \rho^{"}]$, for all $\rho^{'}, \rho^{"}$ such that $\rho^{'}< \rho^{"}$, we can deduce from (32)  the following:  
\begin{equation}
(\mathcal{M}, a_{1},\ldots, a_{k}, b_{1}, b_{2}\ldots b_{l}) \models_{\mathcal{L}}^{H} \psi_{i}[\rho^{'}, \rho^{"}],    
\end{equation}
for $i = 1,2\ldots, n$. It means that 
\begin{equation}
(\mathcal{M}, a_{1},\ldots, a_{k}, b_{1}, b_{2}\ldots b_{l}) \models_{\mathcal{L}} \Sigma^{\psi},    
\end{equation}
i.e. $\Sigma^{\psi}$ is consistent. Since $\Sigma^{\psi}$ was chosen arbitrarily, we can state that each finite subset of $\mathrm{Th}((\mathcal{M},a)_{a\in\mathcal{M}})^{\mathcal{L}}\cup \mathrm{Th}(\mathcal{N},a,b)_{a\in\mathcal{M}, b\in\mathcal{N/M}})^{\mathcal{L}}$ is consistent. Because of the compactness property for $\mathcal{L}$, we can assert that the whole theory is consistent as required.
\end{proof}

\begin{lemma} Let $(\mathcal{L}, \lhd^{H})$ be an abstract logic with H\'{a}jek's approximation. 
Suppose that $\theta$ is a sentence of $\mathcal{L}[L]$ that is not reducible to $HLI$. Then
there exist an $\lhd^{H}$-approximation $\eta^{'}$
of $\eta$ and analytic structures $\mathcal{M}$ and $\mathcal{K}$ such that
\begin{enumerate}
    \item $\mathcal{M}\prec^{HA}\mathcal{K}$\,\,($\mathcal{M}$ is H\'{a}jek's elementary substructure),
    \item $\mathcal{M}\models^{HA}\theta$, but $\mathcal{K}\models^{HA}\neg^{weak}\theta^{'}$.
\end{enumerate}
\end{lemma}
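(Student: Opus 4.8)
The statement is the familiar ``initial step'' of a Lindstr\o m-type argument: from the failure of reducibility of $\theta$ I want to manufacture two analytic structures that are indistinguishable by $HLI$ yet split on $\theta$. The construction will run through the Compactness Property, the Weak Negation Property and, for the final upgrade to an elementary pair, the Elementary Chain Condition, with Observation~1 used to transport $HLI$-theories to $\mathcal{L}$-theories along the way.

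First I would fix, once and for all, a $\lhd^{H}$-approximation $\theta'$ of $\theta$ witnessing non-reducibility: by the reducibility definition there is such a $\theta'$ for which \emph{no} pair of $HLI$-sentences $\psi\lhd^{H}\psi'$ satisfies, for every analytic structure $\mathcal{N}$, both $\mathcal{N}\models^{H}_{\mathcal{L}}\theta\Rightarrow\mathcal{N}\models^{H}\psi$ and $\mathcal{N}\models^{H}\psi'\Rightarrow\mathcal{N}\models^{H}_{\mathcal{L}}\theta'$. Then I would form the $HLI$-theory H\'ajek-approximately entailed by $\theta$,
\[
\Theta:=\{\psi\in Sent_{\mathcal{L}(HLI)}\ :\ \mathcal{N}\models^{HA}\theta\ \text{implies}\ \mathcal{N}\models^{HA}\psi\ \text{for every analytic structure}\ \mathcal{N}\},
\]
which is closed under finite conjunction because $HLI$ is closed under conjunction, and I would prove that $\Theta\cup\{\neg^{weak}\theta'\}$ is consistent in $(\mathcal{L},\lhd^{H})$. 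This is the heart of the matter. Arguing by contradiction and invoking the Compactness Property, a finite $\Theta_{0}\subseteq\Theta$ would already make $\Theta_{0}\cup\{\neg^{weak}\theta'\}$ unsatisfiable; writing $\psi:=\bigwedge\Theta_{0}\in Sent_{\mathcal{L}(HLI)}$, the unsatisfiability together with the Weak Negation Property (every structure $HA$-satisfies $\theta'$ or $\neg^{weak}\theta'$) and the Remark ($\models^{H}$ implies $\models^{HA}$) forces every structure that $HA$-satisfies $\psi$ to $HA$-satisfy $\theta'$; combined with $\theta\models^{HA}\psi$ (definition of $\Theta$) and $\theta\lhd^{H}\theta'$ this yields exactly a sandwiching pair $\psi\lhd^{H}\psi'$ for $\theta$ and $\theta'$, contradicting the choice of $\theta'$.

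Given this consistency, I would let $\mathcal{K}$ be an analytic structure with $\mathcal{K}\models^{HA}\Theta\cup\{\neg^{weak}\theta'\}$ and let $\mathcal{M}_{0}$ be an analytic structure with $\mathcal{M}_{0}\models^{HA}\theta$ (one exists since $\{\theta\}$ is finitely consistent, hence consistent by Compactness); then automatically $\mathcal{M}_{0}\models^{HA}\Theta$, so $\mathcal{M}_{0}$ and $\mathcal{K}$ agree on all $HLI$-sentences that ``come from'' $\theta$. To turn this agreement into the required $\mathcal{M}\prec^{HA}\mathcal{K}$ I would run the standard back-and-forth along an elementary chain: alternately embed the $HLI$-diagram of the ``$\theta$-side'' into an $HLI$-elementary extension of the ``$\neg^{weak}\theta'$-side'' and conversely, iterate $\omega$ times, and pass to the union; the Elementary Chain Condition supplies a colimit $\mathcal{M}^{*}$ of which every stage is an $HLI$-elementary substructure, and re-indexing the chain produces $\mathcal{M}$ and $\mathcal{K}$ with $\mathcal{M}\prec^{HA}\mathcal{K}$, $\mathcal{M}\models^{HA}\theta$ and $\mathcal{K}\models^{HA}\neg^{weak}\theta'$, the last surviving because no $\lhd^{H}$-approximation of $\theta'$ is disturbed by passing to $HLI$-elementary extensions (second clause of the Weak Negation Property). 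Observation~1 keeps the bookkeeping on the $HLI$-side when forming these diagrams.

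I expect everything outside the consistency claim to be routine; the real obstacle is that claim, and inside it the mismatch between the $\models^{H}$ appearing in the reducibility definition and the $\models^{HA}$ used to define $\Theta$ and stated in the conclusion. Bridging the two needs the Remark applied in the correct direction and, when $\lhd^{H}$ is not reflexive on $Sent_{\mathcal{L}(HLI)}$, the insertion of an auxiliary $HLI$-approximation $\psi'$ of $\psi=\bigwedge\Theta_{0}$ to serve as the upper end of the sandwich; this is the step I would write out with the most care.
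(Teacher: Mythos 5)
Your overall strategy is the standard Lindstr\o m one and your first half is in fact more explicit than the paper's own proof about where non-reducibility enters: the paper simply asserts that every finite subset of $\mathrm{Th}^{HA}((\mathcal{M},a)_{a\in\mathcal{M}})^{\mathcal{L}}\cup\{\neg^{weak}\theta^{'}\}$ is satisfiable and invokes compactness, leaving the reader to reconstruct exactly the ``otherwise a finite conjunction $\psi$ would sandwich $\theta$ and $\theta^{'}$, contradicting non-reducibility'' argument that you write out (via the Weak Negation Property and the Remark that $\models^{H}$ implies $\models^{HA}$). Where you genuinely diverge is in how the pair $\mathcal{M}\prec^{HA}\mathcal{K}$ is produced. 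The paper does not use the consequence set $\Theta$ at all: it fixes $\mathcal{M}\models^{HA}\theta$, transports its full H\'ajek elementary diagram to $\mathcal{L}$ via Observation~1 / Lemma~1, and proves consistency of $\Sigma=\mathrm{Th}^{HA}((\mathcal{M},a)_{a\in\mathcal{M}})^{\mathcal{L}}\cup\{\neg^{weak}\theta^{'}\}$; by Definition~26 any model $\mathcal{K}$ of $\Sigma$ is then \emph{automatically} an $\prec^{HA}$-extension of $\mathcal{M}$, so no chain or back-and-forth is needed inside this lemma (the Elementary Chain Condition is reserved for the proof of the Theorem).

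This difference is not merely cosmetic: your final step has a gap. From $\mathcal{M}_{0}\models^{HA}\Theta$ and $\mathcal{K}\models^{HA}\Theta\cup\{\neg^{weak}\theta^{'}\}$ you only know that the two structures agree on the $HLI$-sentences that are $HA$-consequences of $\theta$; they need not be $HLI$-elementarily equivalent, let alone elementarily equivalent over parameters. But the alternating step of your back-and-forth (``embed the $HLI$-diagram of the $\theta$-side into an $HLI$-elementary extension of the $\neg^{weak}\theta^{'}$-side'') requires the consistency of the union of the two parametrized diagrams, which is exactly full elementary equivalence plus an amalgamation argument --- neither of which you have established. The repair is to replace $\Theta$ by the parametrized theory $\mathrm{Th}^{HA}((\mathcal{M},a)_{a\in\mathcal{M}})^{\mathcal{L}}$ from the outset and rerun your compactness-plus-weak-negation argument against that theory (this is where the existential closure of the finite fragment of the diagram has to be massaged into a single $HLI$-sentence sandwiching $\theta$ and $\theta^{'}$); once that is done, $\mathcal{M}\prec^{HA}\mathcal{K}$ falls out of Definition~26 and the chain machinery can be dropped from this lemma entirely.
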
 
\begin{proof} Because of the previous lemma -- it is enough to show that 
\begin{equation*}
    \Sigma = \mathrm{Th}^{HA}((\mathcal{M},a)_{a\in\mathcal{M}})^{\mathcal{L}} \cup \{\neg^{weak} \theta^{'}\}
\end{equation*}
is consistent, or that there
exists a structure, say $\mathcal{K }$, of $\mathcal{L}$ which approximately satisfies every sentence in (a theory) $\Sigma$ of $\mathcal{L}$. In particular, it will be $\mathcal{K}\models^{HA}\neg\theta^{'}$. Indeed, each finite subset of $\Sigma$ is H\'{a}jek's approximately satisfied by a finite extension of $\mathcal{M}$. Thus -- because of compactness property for $\mathcal{L}$ -- the whole $\Sigma$ is H\'{a}jek's approximately satisfied in a structure, which is as required in point 3). 
\end{proof}

\begin{theorem} (\textbf{Lindstr\o m's Theorem for extensions of HLI}.)
Let $(\mathcal{L}, \lhd)$ and $(HLI, \lhd^{H})$ be such that:
\begin{enumerate}
    \item $(\mathcal{L}, \lhd^{H})$ extends $(HLI, \lhd^{H})$, 
    \item  $(\mathcal{L}, \lhd^{H})$  has compactness property, elementary chain condition and it is closed on weak negation.
\end{enumerate}
Then $(\mathcal{L}, \lhd^{H})\equiv(HLI, \lhd^{H})$.
\end{theorem}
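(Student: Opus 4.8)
The plan is to prove $(\mathcal{L},\lhd^{H})\equiv(HLI,\lhd^{H})$; since $\mathcal{L}$ extends $HLI$, this reduces to showing that every sentence of $\mathcal{L}$ is reducible to $HLI$. Following the outline of Section~5.2, I would argue by \emph{reductio ad absurdum}: assume some sentence $\theta\in\mathcal{L}[L]$ is \emph{not} reducible to $HLI$, and extract a contradiction from the three standing hypotheses — compactness, the elementary chain condition, and closure under weak negation.

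First I would feed this $\theta$ to Lemma~2. It returns an $\lhd^{H}$-approximation $\theta'$ of $\theta$ and two structures with integrals $\mathcal{M}_{0},\mathcal{N}_{0}$ with $\mathcal{M}_{0}\prec^{HA}\mathcal{N}_{0}$, $\mathcal{M}_{0}\models^{HA}\theta$, and $\mathcal{N}_{0}\models^{HA}\neg^{weak}\theta'$. Two small observations are worth isolating before continuing. By clause~2 of the definition of an $\lhd^{H}$-approximation system $\theta'\in\mathcal{L}[L]$, and then $\neg^{weak}\theta'\in\mathcal{L}[L]$ by clause~1 of the Weak Negation Property; so both $\theta$ and $\neg^{weak}\theta'$ are genuine $\mathcal{L}$-sentences. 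And $\prec^{HA}_{\mathcal{L}}$ propagates $\models^{HA}$ upward: if $\mathcal{A}\prec^{HA}_{\mathcal{L}}\mathcal{B}$ then $\mathcal{B}$ H\'ajek-approximately satisfies $\mathrm{Th}(\mathcal{L})^{HA}(\mathcal{A})$ by the very definition of H\'ajek's elementary substructure, hence any sentence H\'ajek-approximately satisfied by $\mathcal{A}$ is H\'ajek-approximately satisfied by $\mathcal{B}$.

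Next I would build an $\omega$-chain and pass to its limit. Iterating Lemma~1 — starting from $\mathcal{M}_{0}\prec^{HA}\mathcal{N}_{0}$ and at each stage amalgamating the current structure with a further H\'ajek-elementary extension of $\mathcal{N}_{0}$ lying above it — produces an increasing chain $\mathcal{K}_{0}\prec^{HA}_{\mathcal{L}}\mathcal{K}_{1}\prec^{HA}_{\mathcal{L}}\cdots\prec^{HA}_{\mathcal{L}}\mathcal{K}_{n}\prec^{HA}_{\mathcal{L}}\cdots$ $(n<\omega)$ in which every $\mathcal{K}_{n}$ is simultaneously a H\'ajek-elementary extension of $\mathcal{M}_{0}$ and of $\mathcal{N}_{0}$. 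By the Elementary Chain Condition there is a structure with integrals $\mathcal{M}^{\ast}$, uniquely determined by $\bigcup_{n}\mathcal{K}_{n}$, with $\mathcal{K}_{n}\prec^{HA}_{\mathcal{L}}\mathcal{M}^{\ast}$ for all $n$; composing the embeddings gives $\mathcal{M}_{0}\prec^{HA}_{\mathcal{L}}\mathcal{M}^{\ast}$ and $\mathcal{N}_{0}\prec^{HA}_{\mathcal{L}}\mathcal{M}^{\ast}$. Combined with the upward propagation of $\models^{HA}$ recorded above, this yields $\mathcal{M}^{\ast}\models^{HA}\theta$ \emph{and} $\mathcal{M}^{\ast}\models^{HA}\neg^{weak}\theta'$ at once.

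The contradiction is then immediate: since $\theta'$ is an $\lhd^{H}$-approximation of $\theta$, the second item of clause~2 of the Weak Negation Property gives $\mathcal{M}^{\ast}\models^{HA}\neg^{weak}\theta'\Rightarrow\mathcal{M}^{\ast}\not\models^{HA}\theta$, against $\mathcal{M}^{\ast}\models^{HA}\theta$. Hence no non-reducible $\theta$ exists, every $\mathcal{L}$-sentence reduces to $HLI$, and $(\mathcal{L},\lhd^{H})\equiv(HLI,\lhd^{H})$. The step I expect to be the main obstacle is precisely the chain-and-limit construction: one must organise the alternation so that the union structure is, at the same time, a H\'ajek-elementary extension of a model of $\theta$ and of a model of $\neg^{weak}\theta'$, and one must be sure the Elementary Chain Condition really delivers a structure \emph{equipped with semantic integrals} at the limit and that $\models^{HA}$ survives the passage to it — the delicate point being that $\models^{HA}$ quantifies over \emph{all} $\lhd^{H}$-approximations, so preservation has to be verified approximation-by-approximation, exactly in the style of Observation~1. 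The remaining quantitative content (Lemma~1 via compactness and Observation~1; Lemma~2 via compactness and the first item of weak negation) is already packaged in the quoted lemmas.
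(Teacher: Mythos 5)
Your proposal follows essentially the same route as the paper's own proof: Lemma 2 supplies the approximation $\theta'$ and a pair $\mathcal{M}_{0}\prec^{HA}\mathcal{N}_{0}$ separating $\theta$ from $\neg^{weak}\theta'$, Lemma 1 together with the elementary chain condition produces a common H\'ajek-elementary extension satisfying both, and clause 2 of the weak negation property yields the contradiction. Your additional remarks --- that $\neg^{weak}\theta'$ is a genuine $\mathcal{L}$-sentence and that $\prec^{HA}_{\mathcal{L}}$ propagates $\models^{HA}$ upward --- are correct and merely make explicit steps the paper leaves implicit.
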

\begin{proof}
 Let us assume that $(\mathcal{L}, \lhd^{H})$ has compactness property, the elementary chain condition, is closed on a weak negation. Finally, let $(\mathcal{L}, \lhd^{H})$) extend $(HLI, \lhd^{H})$, but not conversely, i.e.,    there is such a formula $\theta$ in a language of $(\mathcal{L}, \lhd^{H})$ that is not reducible to $(HLI, \lhd^{H})$. This last condition together with Lemma 2 allows us to state that there exist such $\mathcal{L}$-structures $\mathcal{M}_{0}$ and $\mathcal{M}_{1}$ that $\mathcal{M}_{0}\prec \mathcal{M}_{1}$ and there exists such a $\lhd$-approximation $\theta^{'}$ of $\theta\in\mathcal{L}$ that:
\begin{equation}
    \mathcal{M}_{0}\models^{HA} \theta\,\,\,\mathrm{and}\,\,\, \mathcal{M}_{1}\models^{HA} \neg^{weak} \theta^{'}.
\end{equation}
Simultaneously, from the elementary chain condition and Lemma 1 used iteratively  -- we can infer that there exists for each sequence of $\mathcal{L}$-structures:
\begin{equation*}
    \mathcal{M}_{0}\prec_{\mathcal{L}}^{HA} \mathcal{M}_{1}\prec_{\mathcal{L}}^{HA}\mathcal{M}_{2}\prec_{\mathcal{L}}^{HA}\ldots
\end{equation*}
such a $\mathcal{L}$-structure $\mathcal{M}$ that
\begin{equation*}
    \mathcal{M}_{0}\prec_{\mathcal{L}}^{HA} \mathcal{M}_{1}\prec_{\mathcal{L}}^{HA}\mathcal{M}_{2}\prec_{\mathcal{L}}^{HA}\ldots \mathcal{M}.
\end{equation*}
In particular, taking now $\mathcal{M}_{0}\prec_{\mathcal{L}}^{HA} \mathcal{M}$ and  
$\mathcal{M}_{1}\prec_{\mathcal{L}}^{HA} \mathcal{M}$ we obtain:
$\mathcal{M}\models^{HA}_{\mathcal{L}}\theta$ and $\mathcal{M}\models^{HA}_{\mathcal{L}} \neg^{weak}\theta^{'}$. It generates a contradiction with point 2. of the weak negation property. Hence, $(\mathcal{L}, \lhd^{H})\equiv(HLI, \lhd^{H})$.  
\end{proof}

\section{Conclusions and closing remarks}

It has been shown how H\'{a}jek's Fuzzy Logic of Integrals -- described classically in \cite{hajek1998book} -- may be characterized in Lindstr\o m-style. Indeed, H\'{a}jek's system was considered as a minimal logic in a broader class of fuzzy logic systems suitable for describing continuous structures based on measures. Simultaneously, H\'{a}jek's logic constitutes the first formal system suitable to describe integrals and their properties which has its abstract logic-based depiction. Nevertheless, it required introducing a two-level satisfaction relation (H\'{a}jek's satisfiability and H\'{a}jek's approximate satisfiability), which increases the complexity of the considerations and potentially decreases the clarity of reasoning. Simultaneously, Lindstr\o m's-type characterizability for all continuous structures -- discussed in this context until today -- is executed through the compactness theorem and the elementary chain condition instead of the compactness theorem and Skolem-Loewenheim's theorem. 

An idea of Lindstr\o m's-type characterizability might also be extrapolated for other structures and entities of functional and real analysis, such as Banach L$^{2}$-spaces. Simultaneously, Lebesgue integrals might be exchanged by transforms or splots.
Unfortunately, a problem arises of a need for the appropriate formal logic systems to describe them and their typical properties. The system proposed in \cite{jobczykecai} constitutes rather a surrogate of the formal system, and it would require some further complements.

\section{Acknowledgement}

Mirna Dzamonja received funding from the European Union Horizon 2020 Research and Innovation Programme under the Maria Skłodowska-Curie grant agreement No. 1010232, FINTOINF. She also gratefully acknowledges her association with IHSPT at Université Panthéon-Sorbonne, the School of Mathematics, The University of East Anglia, and the Academy of Sciences and Arts in Bosnia and Herzegovina, ANUBiH. 

Krystian Jobczyk is grateful to the Polish Fulbright Commission for the Senior Award Grant supporting his research stay in the Saul Kripke Center at the City University of New York, where the paper was written.

%Mirna Dzamonja gratefully acknowledges the help of the Leverhulme Trust
%through a Research Fellowship 2014–2015, the University of Helsinki for their
%hospitality in October 2014 and June 2016, the School of Mathematics,
%The University of East Anglia, and the University Paris Cit\'e, where she was a Professor of Mathematics when this paper was written. She received funding from the European Union Horizon 2020 Research and Innovation Programme under the Maria Sk\l odowska-Curie grant agreement No. 1010232, FINTOINF.  

%\bibliographystyle{elsarticle-harv} 
%\bibliography{elsarticle1}

\begin{thebibliography}{00}





% %% \bibitem[Author(year)]{label}
% %% Text of bibliographic item

% \bibitem[ ()]{}


\bibitem[De Rijke(1995)]{derijke}
  M. De Rijke, 
 A {L}indstr{\o}m Theorem for Modal Logic,
  Modal Logic and Process Algebra,
 1995,
  CSLI Publication.



\bibitem[Hajek(1998)]{hajek1998book}
  P. H\'{a}jek,
  Metamathematics of Fuzzy Logic,
  1998,
  Dordrecht, Kluwer.

\bibitem[Hewitt(1969)]{hewitt}
 E. Hewitt and K. Stromberg,
  Real and abstract analysis,
 1969,
  Springer-Verlag.

  
  
  \bibitem[Barwise(1985)]{barwisefeferman}
  J. Barwise and S. Feferman (eds.),
  Model-Theoretic Logics,
  1985, New York,
  Springer-Verlag.
  
  
 \bibitem[Barwise(1977)]{barwise1977}
 J. Barwise (eds.),
  Handbook of Mathematical Logics,
  1977,
  North Holland.
  
 
  
  \bibitem[Ebbinghaus(1985)]{ebbinghaus1985}
  H.D. Ebbinghaus and J. Barwise and S.  Feferman(eds.),
  Model-Theoretic Logic,
  1985,
  Springer-Verlag.
  
  


\bibitem[Iovino(2001)]{iovino}
  J. Iovino, 
  On the Maximality of Logic with Approximations,
  Journal of Symbolic Logic,
  2001,
  66(4),
  pp.1909-1918.
  

\bibitem[Jobczyk(2014)]{jobczykecai}
  K. Jobczyk and M. Bouzid, and A. Ligeza, and J. Karczmarczuk,
  Fuzzy Logic for Preferences expressible by convolutions,
  Proceedings of ECAI'2014,
  2014,
 1041-42.
 
\bibitem[Barwise(1981)]{barwise1981}
  J. Barwise, and R. Cooper,
  Generalized quantifiers and natural language,
  Linguistics and Philosophy,
  1981,
 4,
  pp.159-219.
  

\bibitem[Barwise(1976)]{barwise1976}
  J. Barwise,
  Some applications of {H}enkin quantifiers,
  Israel Journal of Mathematics,
  1976,
 25,
  pp.47-63.
  

\bibitem[Ben Yacov(2008)]{benyacov}
  I. Ben Yacov  and A. Bernstein,  and W. Henson, and A. Usvyatsov,
  Model theory for metric structures,
  Model Theory with Applications to Algebra and Analysis,
  2008,
  Cambridge University Press,
  315-427.
 

\bibitem[Caicedo(2014)]{xcaicedo}
 X. Caicedo and J. Iovino, 
  Omitting uncountable types on the strenghts of [0,1]-valued logic,
  Annals of Pure and Applied Logic,
  2014,
  65,
  pp.1169-2001.

\bibitem[Henson(1975)]{henson}
  C. W. Henson, 
  When do two {B}anach spaces have isometrically isomorphic nonstand?,
 Israel
 Journal of Mathematics,
 1975,
  22(1),
  pp.57-67.

\bibitem[Henson(1986)]{hensonheinrich}
  C. W. Henson  and S. Heinrich,
  Banach space model theory. {II}. {I}somorphic equivalence,
  Mathematische Nachrichten,
  1986,
  125,
  301-317.
  
\bibitem[Sheperdson(2000)]{shepherdson}
  J. Shepherdson and P. H\'{a}jek and J. Paris,
  Rational {P}avelka predicate logic is a conservative extension of {\L}ukasiewicz predicate logic,
  Journal of Symbolic Logic,
  2000,
  65,
  pp. 669 - 682.

\bibitem[Shelah(1978)]{shelah}
  S. Shelah, 
  The {H}anf number of the first order theory of {B}anach spaces,
  Transactions
 of the American Mathematical Society,
  1978,
  244,
  pp.147-171.






\bibitem[Benthem(2007)]{benthem2007}
  J. Benthem,
  A New Modal {L}indstr{\o}m Theorem,
  Logical Universalis,
  2007,
  1(1),
  pp. 125-138.

\bibitem[Jobczyk(2021)]{jobczyk2021}
  K. Jobczyk,
  The {L}indstr{\o}m-Type Characterization of {H}ajek's Fuzzy Logic of Integrals,
  30th {IEEE} International Conference on Fuzzy Systems, {FUZZ-IEEE}
               2021, Luxembourg, July 11-14, 2021,
  pp. 1-8.
  

\bibitem[Barwise(1981)]{barwisecooper}
 J. Barwise, and J. Cooper,
  Generalized quantifiers and natural language,
  Linguistics and Philosophy,
 1981,
  4,
  pp. 159-219.


\bibitem[Lindstrom(1966)]{lindstrom1966}
  P. Lindstr{\o}m, 
  First Order Predicate Logic with Generalized Quantifiers,
  Teoria,
 1966,
  32,
  pp. 186-195.

\bibitem[Dzamonja(2021)]{dzamonja}
  M. D\v{z}amonja, and J. V\"a\"an\"anen,
  Chain logic and {S}helah's infinitary logic,
  Israel Journal of Mathematics,
  2021,
  45,
  pp. 93-134.

\bibitem[Shelah(2012)]{shelah2012}
  S. Shelah,
  Nice infinitary logic,
  Journal of the American Mathematical Society,
  2012,
  25,
  pp. 395-427.
  

\bibitem[Lindstrom(1969)]{lindstrom1969}
  P. Lindstr{\o}m,
  On Extensions of Elementary Logic,
  Teoria,
  1969,
  35,
  pp. 1-11.
  

\bibitem[Mostowski(1957)]{mostowski}
  A.Mostowski,
  On a generalization of quantifiers,
  Fundamenta Mathematicae,
  44, 1957,
  pp. 12-36.
 \end{thebibliography}

%% else use the following coding to input the bibitems directly in the
%% TeX file.

 \begin{description}
     \item Mirna  D\v{Z}AMONJA, Institut de Recherche en Informatique Fondamentale CNRS et l’Universit\'{e} de Paris, 8 Place Aurelie Nemours, Paris Cedex 13, 75205, France
     \item Krystian JOBCZYK, Department of Applied Computer Science, AGH University of Science and Technology, al. Mickiewicza 30, 30 -059, Krak\'ow, Poland.
 \end{description}

\end{document}